\renewcommand{\mid}{|}
\newcommand{\rright}{\right}
\newcommand{\lleft}{\left}
\newcommand{\rrVert}{\Vert}
\newcommand{\llVert}{\Vert}
\newtheorem{teo}{Theorem}[section]
\newtheorem{prop}[teo]{Proposition}
\newtheorem{lem}[teo]{Lemma}
\newtheorem{clam}[teo]{Claim}
\begin{document}
\begin{frontmatter}

\title{Limiting geodesics for first-passage percolation on~subsets of
$\mathbb{Z}^2$}
\runtitle{Limiting geodesics for first-passage percolation}

\begin{aug}
\author[A]{\fnms{Antonio} \snm{Auffinger}\ead[label=e1]{auffing@uchicago.edu}},
\author[B]{\fnms{Michael} \snm{Damron}\corref{}\ead[label=e2]{mdamron@indiana.edu}\thanksref{T2}}
\and
\author[B]{\fnms{Jack} \snm{Hanson}\ead[label=e3]{jthanson@indiana.edu}\thanksref{T3}}
\runauthor{A. Auffinger, M. Damron and J. Hanson}
\affiliation{University of Chicago, Princeton University and Princeton University}
\address[A]{A. Auffinger\\
Department of Mathematics\\
University of Chicago\\
5734 S. University Avenue\\
Chicago, Illinois 60637\\
USA\\
\printead{e1}}
\address[B]{M. Damron\\
J. Hanson\\
Department of Mathematics\\
Indiana University\\
831 E. 3rd St.\\
Bloomington, Indiana 47405\\
USA\\
\printead{e2}\\
\phantom{E-mail:\ }\printead*{e3}}
\end{aug}
\thankstext{T2}{Supported by NSF Grants DMS-09-01534 and DMS-10-07626.}
\thankstext{T3}{Supported by an NSF graduate fellowship and NSF Grant
PHY-1104596.}

\received{\smonth{2} \syear{2013}}
\revised{\smonth{8} \syear{2013}}

%
\begin{abstract}
It is an open problem to show that in two-dimensional first-passage
percolation, the sequence of finite geodesics from any point to $(n,0)$
has a limit in $n$. In this paper, we consider this question for
first-passage percolation on a wide class of subgraphs of $\mathbb
{Z}^2$: those whose vertex set is infinite and connected with an
infinite connected complement. This includes, for instance, slit
planes, half-planes and sectors. Writing $x_n$ for the sequence of
boundary vertices, we show that the sequence of geodesics from any
point to $x_n$ has an almost sure limit assuming only existence of
finite geodesics. For all passage-time configurations, we show
existence of a limiting Busemann function. Specializing to the case of
the half-plane, we prove that the limiting geodesic graph has one
topological end; that is, all its infinite geodesics coalesce, and
there are no backward infinite paths. To do this, we prove in the
Appendix existence of geodesics for all product measures in our domains
and remove the moment assumption of the Wehr--Woo theorem on absence of
bigeodesics in the half-plane.
\end{abstract}

%
\begin{keyword}[class=AMS]
\kwd[Primary ]{60K35}
\kwd[; secondary ]{82B43}
\end{keyword}
\begin{keyword}
\kwd{First-passage percolation}
\kwd{geodesics}
\kwd{Busemann function}
\end{keyword}

\end{frontmatter}

\section{Introduction}\label{sec1}

First-passage percolation may be regarded as a family of models, each
of which yields a random pseudo-metric on a graph. It was introduced by
Hammersley and Welsh \cite{HW65} as a model for the passage of a fluid
through a porous medium and it has provided many interesting problems
to the probability and statistical physics community. It also has links
to classical physics through disordered Ising models \cite{FLN,HH}
and to mathematical biology through the study of spread of infections
and competition models \cite{Richardson}.

The main goal is to understand the (properly scaled) random geometry
induced by the pseudo-metric. This has been achieved in two (not
necessarily unrelated) ways: first, by studying the asymptotics and
fluctuations of the distance function between two points of diverging
graph distance; second, by understanding the structure of finite or
infinite geodesics, length minimizing paths in this pseudo-metric. This
paper addresses questions in the latter group.

The study of geodesics in first-passage percolation starts with Newman
\cite{N95}, Licea--Newman \cite{LN96} and Wehr \cite{Wehr}. It was
conjectured that every semi-infinite geodesic should have an asymptotic
direction and all such geodesics with a given fixed direction should
merge. These statements were established in \cite{N95,LN96} under
certain strong assumptions on the limit shape, the $t\to\infty$
scaling limit of the random ball of size $t$ of the origin. Although
natural and expected, these assumptions have not been verified.

The analysis of geodesics continues with the work of H\"aggstr\"om--Pemantle \cite{HP}, Garet--Marchand \cite{GM}, Hoffman \cite
{Hoffman1,H08}, Damron--Hochman \cite{DH} and Auf\-finger--Damron \cite
{AD11}. They establish existence of a wide class of first-passage
percolation processes with infinitely many disjoint infinite one-sided
geo\-desics. All these results explored-known properties of the limit
shape or a particular choice of passage-time distribution. Under
minimal assumptions, however, Damron--Hanson \cite{DH12} recently
proved some forms of Newman's conjectures. They establish almost-sure
coalescence of distributional limits of geodesics and nonexistence of
certain infinite backward paths. Despite these advances, it is still an
open problem to show that in two dimensions, the sequence of finite
geodesics from any point to the points $(n,0)$ has a limit.

In this manuscript, we consider this question on infinite subgraphs of
$\mathbb Z^2$. Assuming only existence of finite geodesics, we show
that sequences of finite geodesics from any point to boundary points
have almost sure limits. Our method is motivated by the ``paths
crossing'' trick of Alm and Wierman \cite{AW99}. In the case of the
half-plane, we prove the limiting geodesic graph has one topological
end; that is, all its infinite geodesics coalesce and there are no
backward infinite paths. To our knowledge, this is the first time that
limiting geodesics are shown to exist under minimal assumptions on the
passage times.

We close this section by commenting on limitations of our arguments and
speculations for further advances. The crucial use of the boundary is
to allow the paths crossing argument of Claim~\ref{clamgeodesicintersection}. In the full plane,
this is not possible. Even if one leaves the boundary, taking, for
example, $(0,n)$ in the upper half-plane, this argument breaks down.
Furthermore, the analysis of half-plane geodesics in this paper heavily
uses horizontal translation invariance of the passage-time
distribution. This is required to apply the ergodic theorem at several
points throughout the arguments. So in many other domains (e.g., quarter planes or sectors) we do not know if the geodesics
constructed here coalesce, although it is reasonable to expect them to.

\subsection{Outline of the paper}\label{sec1.1}
In the rest of the \hyperref[sec1]{Introduction}, we give the precise definition of the
model, and we state the main theorems of the paper. In Section~\ref{sec2} we
establish, without any assumption on the passage times, existence of
limits for Busemann functions. Under the hypothesis of existence of
finite geodesics, in Section~\ref{sec3}, we prove existence of the limiting
geodesic graph. In Section~\ref{sec4}, we show that in the upper half-plane,
this limiting graph has one end, establishing coalescence of any pair
of its infinite geodesics. We finish the paper with three \hyperref[secdualedge]{Appendices}.
In Appendix~\ref{secdualedge}, we give an alternate characterization of our domains.
Appendix \ref{secexistence} proves the existence of finite geodesics for all product
measures. Appendix \ref{secWW} extends the Wehr--Woo theorem \cite{WW98} on
absence of doubly infinite geodesics in the half-plane to more general measures.

\subsection{Definitions}\label{sec1.2}
Let $(\mathbb{Z}^2, \mathcal{E}^2)$ denote the square lattice with
nearest-neighbor edges. We consider first-passage percolation on
particular infinite subsets of this graph. Let $V \subseteq\mathbb
{Z}^2$ be a connected [in $(\mathbb{Z}^2,\mathcal{E}^2)$] infinite
set whose complement is also connected and infinite. Write $E$ for the
set of edges with both endpoints in~$V$. We will need the graph dual to
the square lattice, the vertex set of which is $ (\mathbb
{Z}^2 )^* = \mathbb{Z}^2 + (1/2,1/2)$ and the edge set of which
is $ (\mathcal{E}^2 )^* = \mathcal{E}^2 + (1/2,1/2)$. The
edge $e^*$ is said to be dual to $e \in\mathcal{E}^2$ if it bisects $e$.
We prove in Appendix~\ref{secdualedge} that there exists some path
of dual edges
%
\begin{equation}
\label{defGamma} \Upsilon= \bigl(e_i^*\bigr)_{i \in\mathbb{Z}}
\end{equation}
which does not (vertex) self-intersect and such that $(V,E)$ is one of
the two components of the graph formed from $(\mathbb{Z}^2, \mathcal
{E}^2)$ by removing the edges $(e_i)$ dual to those edges $(e_i^*)$.
%
\begin{equation}
\label{defvi} \mbox{Let }v_i \mbox{ be the endpoint of
}e_i\mbox{ that lies in }V.
\end{equation}
Note that while $\Upsilon$ is not self-intersecting, a particular
$v_i$ may appear multiple times (at most 3 times).

We do first-passage percolation in $(V,E)$ by setting $\Omega=
[0,\infty)^E$ and denoting a typical element of $\Omega$ by $\omega=
(\omega_e)_{e \in E}$. For $x,y \in V$, a path from $x$ to $y$ in $V$
is a sequence of alternating vertices and edges
\[
x=w_0,e_0,w_1, \ldots,w_{n-1},e_{n-1},w_n=y
\]
such that for all $i$, $e_i = \{w_i, w_{i+1}\} \in E$. Clearly a path
is uniquely determined by its sequence of vertices or its sequence of
edges, so we will at times refer to it in one of these ways. We will
write $\gamma\dvtx x \leadsto y$ to denote that $\gamma$ is a path from
$x$ to $y$. We will use $\|\cdot\|_1$ to denote the $l^1$ norm.

The resulting passage time is written $\tau$. That is, $\tau(\gamma)
= \sum_{e \in\gamma} \omega_e$ is the passage time of a finite path
$\gamma$ in $(V,E)$ and $\tau(x,y) = \inf_{\gamma\dvtx  x\leadsto y}
\tau(\gamma)$ is the passage time between $x$ and $y$ in $V$. As
defined, $\tau$ is a pseudo-metric. A geodesic from $x$ to $y$ is a
path $\gamma\dvtx  x \leadsto y$ in $(V,E)$ such that $\tau(\gamma) =
\tau(x,y)$. Note that if there exists a geodesic between some pair of
points, there is at least one vertex self-avoiding geodesic.

We will define (for $x$ and $y$ elements of $V$) the Busemann function
\[
B_n(x,y) = \tau(x,v_n) - \tau(y,v_n).
\]

\subsection{Main results}\label{sec1.3}

\subsubsection{Arbitrary $(V,E)$}\label{sec1.3.1}
The first result shows that asymptotic limits of the $(B_n)$ exist
under no assumptions on $\omega$. That is, it holds for all passage-time configurations.

\begin{teo}\label{teobusemannlimits}
For any $x,y \in V$ and $\omega\in\Omega$,
%
\begin{equation}
\label{eqbusemannlimits} B(x,y):= \lim_{n \to\infty} B_n(x,y)\qquad
\mbox{exists}.
\end{equation}
\end{teo}

\begin{rem}
We strongly believe that Busemann limits exist in wide generality (in
particular, even in the full-plane), but we do not have a proof. That
is, we expect that for any $\theta\in[0,2\pi)$ and any sequence
$(x_n)$ of vertices in $\mathbb{Z}^2$ such that $\arg x_n \to\theta$
with $x_n \to\infty$, the limit $\tau(x,x_n) - \tau(y,x_n)$ exists
almost surely for $x,y \in\mathbb{Z}^2$.
\end{rem}

For the second result we consider a measure $\mathbb{P}$ on $\Omega$
(with the product Borel sigma algebra) that admits geodesics; that is,
\[
\mathbb{P}(\exists\mbox{ a geodesic } \gamma\dvtx x \leadsto y) = 1\qquad\mbox{for
all } x,y \in V.
\]
Under this condition we can associate to almost every $\omega\in
\Omega$ and each $n \in\mathbb{Z}$ a geodesic graph $\mathbb{G}_n =
\mathbb{G}_n(\omega)$. This is a directed graph with vertex set $V$
built from a configuration $\eta_n=\eta_n(\omega)$ from the space $\{
0,1\}^{\vec{E}}$, where $\vec{E}$ is the set of directed edges
corresponding to $E$,
\[
\vec{E} = \bigl\{(x,y)\dvtx  \{x,y\} \in E\bigr\}.
\]
The definition of $\eta_n$ is as follows. We set $\eta_n((x,y)) = 1$
if $\{x,y\}$ is in a geodesic from some vertex in $V$ to $v_n$ and
$\tau(x,v_n) \geq\tau(y,v_n)$. Otherwise we set $\eta_n((x,y))=0$.
The graph $\mathbb{G}_n$ is then induced by its directed edge set, the
set of $e$ such that $\eta_n(e) = 1$.

We say that $\eta_n \to\eta\in\{0,1\}^{\vec{E}}$ if for each $e
\in\vec{E}$, $\eta_n(e) \to\eta(e)$. In this case we write
$\mathbb{G}_n \to\mathbb{G}$, where $\mathbb{G}$ is the directed
graph corresponding to $\eta$.

\begin{teo}\label{teographconvergence}
Suppose that $\mathbb{P}$ admits geodesics. Then with probability one,
$(\mathbb{G}_n)$ converges to a graph $\mathbb{G}$. Each directed
path in $\mathbb{G}$ is a geodesic.
\end{teo}

\subsubsection{On the half-plane $\mathbb{H}$}\label{sec1.3.2}

Taking the vertex set $V = V_H = \{(x_1,x_2) \in\mathbb{Z}^2\dvtx  x_2
\geq0\}$ and $E_H$ the induced set of edges, we can analyze
first-passage percolation more closely on $\mathbb{H} = (V_H,E_H)$,
taking advantage of translation invariance of standard measures. The
relevant space is $\Omega_H = [0,\infty)^{E_H}$ and we define a
family of translation operators $\{T_x\dvtx  x \in V_H\}$ on $\Omega_H$ by
\[
(T_x \omega)_e = \omega_{e+x},
\]
where if $e=\{v,w\}$ then $e+x = \{v+x,w+x\}$.

For the results in this section we will consider a probability measure
$\mathbb{P}$ satisfying one of two assumptions, labeled \textup{(A)}~and~\textup{(B)} below. Assumption~\textup{(B)} includes the upward finite energy
property from \cite{DH12}:

\begin{df}
Given an edge set $E'$, a Borel probability measure $\mathbb{P}$ on
$[0,\infty)^{E'}$ satisfies the upward finite energy property if for
each $e \in E'$ and $\lambda$ such that $\mathbb{P}(\omega_e \geq
\lambda)>0$, we have
\[
\mathbb{P}(\omega_e \geq\lambda\mid\check{\omega}) > 0\qquad
\mbox{almost surely}.
\]
\end{df}

In the definition we have used the notation $\omega= (\omega_e,
\check{\omega})$, where $\check{\omega} = (\omega_f\dvtx\break  f \neq e)$.

The assumptions we need are:
\begin{longlist}[(A)]
\item[(A)] $\mathbb{P}$ is a product measure with continuous
marginals, or
\item[(B)] $\mathbb{P}$ is the restriction to $[0,\infty
)^{E_H}$ of a Borel probability measure $\widehat{\mathbb{P}}$ on
$[0,\infty)^{\mathcal{E}^2}$ that satisfies the upward finite energy
property and the assumptions of Hoffman \cite{H08}:
\begin{enumerate}[(a)]
\item[(a)]$\widehat{\mathbb{P}}$ is ergodic relative to the translations
$T_x$ for $x \in\mathbb{Z}^2$;
\item[(b)]$\widehat{\mathbb{P}}$ has all the symmetries of $\mathbb{Z}^2$;
\item[(c)]$\int\omega_e^{2+\alpha} \,\mathrm{d}\widehat{\mathbb{P}}<\infty
$ for some $\alpha>0$;\vspace*{2pt}
\item[(d)]$\widehat{\mathbb{P}}$ has unique passage times: with probability
one, no two (edge) nonempty distinct paths have the same passage time and\vspace*{1pt}
\item[(e)] the limiting shape for $\widehat{\mathbb{P}}$ is bounded.
\end{enumerate}
\end{longlist}
Under\vspace*{1pt} parts (a)--(c) of assumption \textup{(B)}, Kingman's theorem implies
that if we write $\tau'$ for the passage time in $\mathbb{Z}^2$, then
for each $y \in\mathbb{Z}^2$, the limit $g(y) = \lim_{n \to\infty}
\tau'(0,ny)/n$ exists almost surely and in $L^1$. Part~(b) is required
for the geodesic graph to be a forest. This is used several times in
the final arguments. So our arguments do not apply, for instance, to
geometric\vspace*{1pt} weights. Part~(e) of assumption \textup{(B)} is then the statement
that $\inf_{y \neq0} \frac{g(y)}{\|y\|_1}>0$.

Under either of these assumptions, one can show that $\mathbb{P}$
admits geodesics. Under~\textup{(A)}, we show it in Appendix~\ref{secexistence} for general graphs $(V,E)$ considered in this paper. Under
\textup{(B)} it follows from the shape theorem proved by Boivin \cite{B90}
and boundedness of the limit shape. This means we can use the results
from the previous subsection. For the statement of the main theorem, we
use the shorthand $x \to y$ for vertices $x,y$ in a directed graph
$\vec{G}$ if there is a directed path from $x$ to~$y$ in $\vec{G}$.

\begin{teo}\label{teohalfplanegeodesics}
Assume \textup{(A)} or \textup{(B)}. Writing $x_n = (n,0)$, the geodesic graphs
$(\mathbb{G}_n)$ converge almost surely to a directed graph $\mathbb
{G}$ with the following properties:
\begin{longlist}[(3)]
\item[(1)] each vertex in $V_H$ has out-degree 1;
\item[(2)] viewed as an undirected graph, $\mathbb{G}$ has no circuits;
\item[(3)]for each $x \in V_H$, the backward cluster $B_x = \{y \in
V_H\dvtx  y \to x\}$ is finite;
\item[(4)] writing $\Gamma_x$ for the unique self-avoiding infinite
directed path in $\mathbb{G}$ starting from $x$, for all $x,y \in
V_H$, $\Gamma_x$ and $\Gamma_y$ coalesce. That is, their edge
symmetric difference is finite.
\end{longlist}
\end{teo}

\begin{rem}
It is an important problem to show that the geodesics constructed above
have direction $\mathbf{e}_1$. We believe this is true; however, we
cannot prove~it.
\end{rem}

\section{Existence of Busemann limits}\label{sec2}

The main goal of this section is prove Theorem~\ref{teobusemannlimits}. We begin with $x,y \in\{v_i\}_{i \in\mathbb{Z}}$,
defined in (\ref{defvi}).

\begin{prop}
\label{propbusemannaxis}
For any $x, y \in\{v_i\}_i$ and $\omega\in\Omega$, the limit in
(\ref{eqbusemannlimits}) exists. Moreover, the convergence is monotone.
\end{prop}

\begin{pf}
We assume that $x = v_i$ and $y = v_j$ for $i < j$, and we let
$\varepsilon>0$. Fix any $n_2>n_1 > j$ such that $v_{n_1} \neq v_{n_2}$.
We can now choose vertex self-avoiding paths $\gamma\dvtx  x \leadsto
v_{n_1}$ and $\gamma'\dvtx  y \leadsto v_{n_2}$ to satisfy
\[
\tau(\gamma) \leq\tau(x,v_{n_1}) + \varepsilon\quad\mbox{and}\quad \tau
\bigl(\gamma'\bigr) \leq\tau(y,v_{n_2}) + \varepsilon.
\]
Form a continuous path $\beta$ (in $\mathbb{R}^2$) by taking $\gamma
$, adjoining half of the edge $e_{n_1}$, adjoining the segment of
$\Upsilon$ [recall the definition from (\ref{defGamma})] between
$e_{n_1}^*$ and $e_i^*$, and then finally appending half of the edge
$e_i$, to form a continuous circuit based at $x$. Since this circuit is
a Jordan curve, it separates $\mathbb{R}^2$ into an interior and an
exterior. See Figure~\ref{figfigurejordan} for an illustration of
$\beta$.

\begin{figure}[t]

\includegraphics{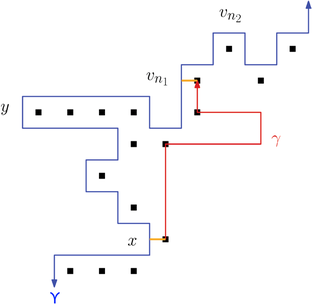}

\caption{Construction of the Jordan curve $\beta$. It consists of the
right path $\gamma$, two half-edges connecting $\gamma$ to the left
path, which is a segment of $\Upsilon$ between $v_{n_1}$ and
$x$.}\label{figfigurejordan}
\end{figure}

Our first observation is that either $y \in\beta$ or $y$ is in the
interior of $\beta$ (and in fact, $y \in\beta$ only if $y \in\gamma
$). The reason is that $y$ is an endpoint of one of the $e_i$'s, which
must cross $\beta$. Since the other endpoint of this edge is in $V^c$,
it cannot be in the interior of $\beta$ (or on $\beta$). The Jordan
curve theorem implies that these endpoints are in different components,
and thus if $y \notin\beta$, it must be in the interior of $\beta$.
We make the following claim:

\begin{clam}
\label{clamgeodesicintersection}
$\gamma' \cap\gamma$ contains a vertex of $\mathbb{Z}^2$.
\end{clam}

To show the claim, we first prove that $v_{n_2}$ is either on $\beta$
or in the exterior of~$\beta$. Accordingly, assume $v_{n_2}$ is not on
$\beta$. Notice that neither endpoint of $e_{n_2}$ can touch~$\beta$.
Furthermore the edge $e_{n_2}$ cannot intersect $\beta$ because
$e_{n_2}^*$ is not contained in $\beta$. Therefore both endpoints are
in the same component of the complement of $\beta$ and since the other
one is in $V^c$, they must be in the exterior of $\beta$.

Now, considering $\gamma'$ as a continuous plane curve, we note that
$\gamma'$ must intersect~$\beta$ (since it has to reach $v_{n_2}$,
which is not in the interior of $\beta$), but it cannot intersect
$\Upsilon$. Therefore, it must intersect $\gamma$; this intersection
must happen at a vertex, though it may of course also happen at one or
more edges. This proves the claim.

We will complete the existence proof for the limit in (\ref{eqbusemannlimits}) by showing that $B_n(x,y)$ is monotone in $n$ for
fixed $x$ and $y$. Let $n_1$ and $n_2$ be as above. For any path
$\sigma: a \leadsto b$ and $c \in\sigma$ write $\sigma\mid_c$ for
the segment of $\sigma$ from the first meeting of $c$ onward and
$\sigma\mid^c$ for the segment of $\sigma$ to the first meeting of
$c$. Then letting $w$ be a point in $\gamma' \cap\gamma$,
\begin{eqnarray*}
\tau(x,v_{n_2}) + \tau(y,v_{n_1}) &\leq& \bigl[ \tau\bigl(
\gamma\mid^w\bigr) + \tau\bigl(\gamma'
\mid_w\bigr) \bigr] + \bigl[ \tau\bigl(\gamma'
\mid^w\bigr) + \tau(\gamma\mid_w) \bigr]
\\
&=& \bigl[ \tau\bigl(\gamma\mid^w\bigr) + \tau(\gamma
\mid_w) \bigr] + \bigl[ \tau\bigl(\gamma'
\mid^w\bigr) + \tau\bigl(\gamma' \mid_w
\bigr) \bigr]
\\
&=& \tau(\gamma) + \tau\bigl(\gamma'\bigr) \leq
\tau(x,v_{n_1}) + \tau (y,v_{n_2}) + 2 \varepsilon.
\end{eqnarray*}
Taking $\varepsilon\to0$,
%
\begin{equation}
\label{eqmonotonetimes} \tau(x,v_{n_2}) + \tau(y,v_{n_1}) \leq
\tau(x,v_{n_1}) + \tau(y,v_{n_2}).
\end{equation}
We can rearrange the terms in (\ref{eqmonotonetimes}) to find that
\[
B_{n_2}(x,y) \leq B_{n_1}(x,y).
\]
Since $B_n(x,y)$ is a sequence bounded below by $-\tau(x,y)$, $\lim
B_n(x,y)$ exists.
\end{pf}

We now move on to general $x,y \in V$ and prove the limit in (\ref{eqbusemannlimits}) exists. We will need a few geometric notions. Let
$\alpha$ denote the vertex set of a finite, connected subgraph of
$(V,E)$ which contains some $v_i$. Denote by $(V',E')$ the graph formed
by setting $V' = V \setminus\alpha$ and letting $E'$ be formed from
$E$ by removing every edge with an endpoint in $\alpha$. The graph
$(V',E')$ may have multiple components, but the following claim allows
us to find a single component defining the Busemann function.

\begin{clam}
\label{clamcomponents}
There exists a component $(\overline{V},\overline{E})$ of $(V',E')$
and an $M < \infty$ such that, for all $n > M$,
$v_n \in\overline{V}$. Moreover, $(\overline{V},\overline{E})$ is
formed\vspace*{1pt} from $(\mathbb{Z}^2,\mathcal{E}^2)$ by the removal of edges
dual to a doubly infinite, self-avoiding path $\overline{\Upsilon}$
in the dual lattice.
\end{clam}

\begin{pf}
Note that if $v_n \neq v_{n+1}$, then there exists a path in $(V,E)$
between $v_n$ and $v_{n+1}$ of Euclidean length at most two. Since $\|
v_n\|_1 \rightarrow\infty$, we can choose $M$ such that
\[
\operatorname{dist}\bigl(\{v_n\}_{n > M}, \alpha\bigr) \geq2,
\]
where $\operatorname{dist}(\cdot,\cdot)$ is the $(V,E)$ graph distance.
Then $\{v_n\}_{n > M}$ must all lie in one component of $(V',E')$,
which we denote by $(\overline{V},\overline{E})$.

It remains to show that $(\overline{V},\overline{E})$ can be formed
from $(\mathbb{Z}^2, \mathcal{E}^2)$ by cutting along a doubly
infinite, loop-free dual path $\overline{\Upsilon}$. By
Proposition~\ref{propdualedge} in Appendix~\ref{secdualedge},
it suffices to show that both $\overline{V}$ and $\mathbb{Z}^2
\setminus\overline{V}$ are infinite and connected (as subsets of~$\mathbb{Z}^2$). Both claims are true for $\overline{V}$. Moreover,
$\mathbb{Z}^2 \setminus\overline{V}$ is infinite, since it contains~$V^c$. Because $\alpha$ is connected and contains a point of $\{v_i\}
_i$, we see that $\mathbb{Z}^2 \setminus\overline{V}$ is connected;
it consists of the union of $\alpha$, $V^c$ and the sites of $V$ which
were only reachable from the large $v_n$'s via sites of $\alpha$; see
Figure~\ref{figalpha}. Therefore, by the above, the dual edge
boundary between $\overline{V}$ and $\mathbb{Z}^2 \setminus\overline
{V}$ is a doubly infinite self-avoiding dual path, proving the claim.
\end{pf}

We note that, by Proposition \ref{propbusemannaxis} and the
linearity of the Busemann function, we need only prove the existence of
the limit in (\ref{eqbusemannlimits}) when $y\notin\{v_i\}_i$ but
$x$ is some $v_m$ (which can be chosen as a function of $y$). Fix $y$,
and denote by $\alpha$ the vertex set of some (vertex self-avoiding,
finite) path in $(V,E)$ which starts at a vertex adjacent to $y$ and
ends at a vertex $v_m\in\{v_i\}_i$. Form the graph $(\overline{V},
\overline{E})$ as in Claim \ref{clamcomponents}; denote by
$\overline{\Upsilon}$ the doubly-infinite dual path whose existence
is established in the claim, and define $\{\bar{v}_i\}_i$ analogously
to $\{v_i\}_i$. We may choose an orientation of $\{\bar{v}_i\}_i$ such
that the following holds. There exists $\kappa\in\mathbb{Z}$ such
that for all large $n$, $v_n=\bar{v}_{n+\kappa}$.\vspace*{2pt}

\begin{figure}

\includegraphics{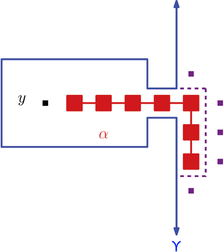}

\caption{Removal of the vertex set $\alpha$ from $V$. The enlarged
squares represent $\alpha$ and the dotted path is the segment of
$\overline{\Upsilon}$ that does not lie in $\Upsilon$. The vertices
$\bar{v}_j$ for $j \in J$ are drawn neighboring the dotted path on the right.}
\label{figalpha}
\end{figure}

If $\bar{\tau}$ and $\overline{B}_n$ are the passage times and
Busemann functions in $(\overline{V},\overline{E})$ (defined~in the
obvious way), then
%
\begin{equation}
\label{eqdudeoverlines} \overline{B}(\bar{v}_i,\bar{v}_j) =
\lim_{n \rightarrow\infty} \overline{B}_n(\bar{v}_i,
\bar{v}_j)
\end{equation}
exists for all $i$ and $j$ by Proposition \ref{propbusemannaxis}.

Denote by $J \subseteq\mathbb{Z}$ the finite set of indices such that
$\bar{v}_j$ is at Euclidean distance one from $\alpha$. Note that $y$
is adjacent to some vertex of $\alpha$; therefore, if $y \in\overline
{V}$, then $y = \bar{v}_j$ for some $j \in J$. We will want to apply
the following lemma to both $z=y$ and $z=v_m$:

\begin{lem}\label{lemnewlemma}
Let $z \in V$ be such that either $z \in\{\bar{v}_j\dvtx  j \in J\}$
or $z \notin\overline{V}$. Then
%
\begin{equation}
\label{eqfrostmanslemma} \tau(z,v_n) = \min_{j \in J} \bigl\{
\tau(z,\bar{v}_j) + \bar{\tau }(\bar{v}_j,v_n)
\bigr\}.
\end{equation}
\end{lem}

\begin{pf}
Let $\varepsilon>0$ and $j \in J$. Then find paths $\gamma\dvtx  z \leadsto
\bar{v}_j$ in $(V,E)$ and $\bar{\gamma}\dvtx  \bar{v}_j \leadsto v_n$
in $(\overline{V},\overline{E})$ such that $\tau(\gamma) \leq\tau
(z,\bar{v}_j) + \varepsilon$ and $\bar{\tau}(\bar{\gamma}) \leq
\bar{\tau}(\bar{v}_j,v_n) + \varepsilon$. Build a path $\sigma\dvtx
z \leadsto v_n$ in $(V,E)$ by concatenating $\gamma$ with $\bar
{\gamma}$. Then
\[
\tau(z,v_n) \leq\tau(\sigma) = \tau(\gamma) + \bar{\tau }(\bar{
\gamma}) \leq\tau(z,\bar{v}_j) + \bar{\tau}(\bar
{v}_j,v_n) + 2\varepsilon.
\]
Taking $\varepsilon\to0$ and a minimum over $j \in J$ gives the
inequality $\leq$ in (\ref{eqfrostmanslemma}).

To prove the other inequality, let $\sigma\dvtx  z \leadsto v_n$ in $(V,E)$
be a path such that $\tau(\sigma) \leq\tau(z,v_n) + \varepsilon$. The
path $\sigma$ must have a terminal segment $\bar{\gamma}$ which lies
in $(\overline{V},\overline{E})$ from some $\bar{v}_{j_0}$ to
$v_n$---this terminal segment may be equal to the singleton $\{v_n\}$.
Write $\gamma$ for the segment of $\sigma$ from $z$ to the last
meeting of $\bar{v}_{j_0}$. Then
\begin{eqnarray*}
\min_{j \in J} \bigl\{ \tau(z,\bar{v}_j) + \bar{\tau}(\bar {v}_j,v_n) \bigr\} &\leq&\tau(z,
\bar{v}_{j_0}) + \bar{\tau }(\bar{v}_{j_0},v_n)
\\
&\leq&\tau(\gamma) + \bar{\tau}(\bar{\gamma}) = \tau(\sigma) \leq
\tau(z,v_n) + \varepsilon.
\end{eqnarray*}
Taking $\varepsilon\to0$ proves (\ref{eqfrostmanslemma}).\vadjust{\goodbreak}
\end{pf}

So, defining
\[
\varphi_j(z,n):= \tau(z,\bar{v}_j) + \bar{\tau}(
\bar {v}_j,v_n) - \bar{\tau}(\bar{v}_1,
v_n),
\]
we see that $\tau(z,v_n) = \bar{\tau}(\bar{v}_1, v_n) + \min_{j \in J}\varphi_j(z,n)$.
Moreover,
\[
\lim_{n \rightarrow\infty} \varphi_j (z, n)=:
\varphi_j(z)
\]
exists by (\ref{eqdudeoverlines}), and therefore so does
%
\begin{equation}
\label{eqpartiallimits} \lim_{n\to\infty} \bigl[\tau(z,v_n) -
\bar{\tau}(\bar{v}_1, v_n)\bigr].
\end{equation}

Finally, we can use the above to show convergence of $B_n(y, v_m)$ as
$n \to\infty$. Write
\begin{eqnarray*}
\lim_{n \rightarrow\infty} B_n (y, v_m) &=& \lim
_{n \rightarrow
\infty} \bigl[ \tau(y,v_n) - \tau(v_m,
v_n) \bigr]
\\
&=& \lim_{n \rightarrow\infty} \bigl[ \tau(y,v_n) - \bar{\tau
}(\bar{v}_1, v_n) + \bar{\tau}(\bar{v}_1,
v_n) - \tau(v_m, v_n) \bigr]
\\
&=& \lim_{n \rightarrow\infty} \bigl[ \tau(y,v_n) - \bar{\tau
}(\bar{v}_1, v_n) \bigr] - \lim_{n \rightarrow\infty}
\bigl[ \tau (v_m, v_n) - \bar{\tau}(
\bar{v}_1, v_n) \bigr].
\end{eqnarray*}
Using (\ref{eqpartiallimits}) with $z=y$ and $z=v_m$ completes the proof.

\section{Geodesic limits}\label{sec3}
Our aim in this section is to prove Theorem~\ref{teographconvergence}. We begin with general properties of geodesic graphs
from \cite{DH12}.

\subsection{Geodesic graphs}\label{sec3.1}
We will show that the geodesic graph is in fact a union of geodesics
with the appropriate directions. Moreover, under the assumption of
unique passage times, it is a directed forest.

%
\begin{prop}\label{propGGproperties}
Assume $\mathbb{P}$ admits geodesics.
\begin{longlist}[(2)]
\item[(1)] Almost surely, every finite directed path in $\mathbb
{G}_n$ is a geodesic. It is a subpath of a geodesic ending in $v_n$.
\item[(2)] Assume $\mathbb{P}$ has unique passage times. Then each $x
\in V \setminus\{v_n\}$ has out-degree~1 in $\mathbb{G}_n$.
Furthermore viewed as an undirected graph, $\mathbb{G}_n$ has no circuits.
\end{longlist}
\end{prop}

\begin{pf}
Let $\gamma$ be a directed path in $\mathbb{G}_n$ and write the
(directed) edges of $\gamma$ in order as $e_1, \ldots, e_k$. Write $J
\subseteq\{1, \ldots, k\}$ for the set of $j$ such that the path
$\gamma_j$ induced by $e_1, \ldots, e_j$ is a subpath of a geodesic
from some vertex to $v_n$. We will show that $k \in J$. By construction
of $\mathbb{G}_n$, the edge $e_1$ is in a geodesic from some point to
$v_n$. Furthermore, if $e_1 = (x,y)$, then $\tau(x,v_n) \geq\tau
(y,v_n)$ because $\eta_n(e_1)=1$, so if these passage times are not
equal, $e_1$ must be traversed from $x$ to $y$ in this geodesic, giving
$1\in J$. If they are equal, then $\omega_{\{x,y\}}=0$ and $1 \in J$
as well.

Now suppose that $j \in J$ for some $j<k$; we will show that $j+1 \in
J$. Take $\sigma$ to be a geodesic from a point $z$ to $v_n$ which
contains $\gamma_j$ as a subpath. Write $\sigma'$ for the segment of
the path from $z$ to the far endpoint $w_j$ of $e_j$ (i.e., we
terminate $\sigma$ directly after traversing the path $\gamma_j$ for
the first time). The edge $e_{j+1}$ is also in $\mathbb{G}_n$ so it is
in a geodesic from some point to $v_n$. If we write $\hat{\sigma}$
for the piece of this geodesic from its first meeting of $w_j$ to
$v_n$, we claim that the concatenation of $\sigma'$ with $\hat{\sigma}$ is a geodesic from $z$ to $v_n$. To see this,
\[
\tau(z,v_n) = \sum_{e \in\sigma'}
\omega_e + \sum_{e \in\sigma
\setminus\sigma'}
\omega_e = \sum_{e \in\sigma'}
\omega_e + \sum_{e \in\hat{\sigma}}
\omega_e.
\]
The last equality holds since both $\hat{\sigma}$ and the segment of
$\sigma$ from $w_j$ to $v_n$ are geodesics, so they have equal passage
time. Hence $j+1 \in J$, and we are done with the first item.

For the second item, assume that $\mathbb{P}$ has unique passage times
so that in particular, almost surely, no edges have passage time 0.
Therefore if a directed edge is in a geodesic from a point to $v_n$, it
must be traversed in this direction. Note that from each vertex $v \in
V \setminus\{v_n\}$ there is at least one geodesic from $v$ to $v_n$.
The first edge of this geodesic is pointed away from $v$, so $v$ has an
out-degree of at least one. Assuming $v$ has an out-degree of at least
two, then we write $e_1$ and $e_2$ for two such directed edges. By the
first item, there are two geodesics, $\gamma_1$ and $\gamma_2$, to
$v_n$ such that $e_i \in\gamma_i$ for $i=1,2$. If either of these
paths returned to $v$, then there would exist a finite path with
passage time zero, contradicting unique passage times. So the portions
of the $\gamma_i$'s from $v$ to $v_n$ have distinct edge sets and
therefore have different passage times. This contradicts both being geodesics.

We finish by arguing for the absence of circuits. If there is a circuit
in the undirected version of $\mathbb{G}_n$, then by virtue of each
vertex having out-degree one, this is a directed circuit and thus a
geodesic. But then it has passage time zero, a contradiction.
\end{pf}

\subsection{Proof of Theorem~\texorpdfstring{\protect\ref{teographconvergence}}{1.3}}\label{sec3.2}
The second statement of the theorem follows directly from the previous
section: each directed path in $\mathbb{G}_n$ is a geodesic. So we
prove the first statement and show that for each directed edge $(x,y)$
in $\vec{E}$, with probability one the value of $\eta_n((x,y))$ is
eventually constant. Fix $x \in V$ and choose $m \in\mathbb{N}$ such
that, defining [with $d(\cdot,\cdot)$ the graph distance in $(V,E)$]
\begin{eqnarray*}
S_m &=& \bigl\{ w \in V\dvtx  d(x,w) \leq m \bigr\},
\\
\partial S_m &=& \bigl\{ w \in V\dvtx  d(x,w) = m+1 \bigr\},
\end{eqnarray*}
we have $S_m \cap\{v_i\}_i \neq\varnothing$. Setting $\alpha= S_m$,
we may apply Claim~\ref{clamcomponents} to find $(\overline
{V},\overline{E})$, a component of the graph generated by removing
$\alpha$ from $(V,E)$ containing $v_n$ for all large $n$. As before,
it can be alternatively created by cutting $(\mathbb{Z}^2,\mathcal
{E}^2)$ along a doubly infinite self-avoiding dual path $\overline
{\Upsilon}$. As in the last section, we will decorate expressions with
an overline when they are meant for the model in $(\overline{V},
\overline{E})$ (e.g.,~$\bar\tau$). For the remainder,
we also fix $\omega\in\Omega$ such that for each $x,y \in V$, there
is a geodesic from $x$ to $y$.

For each $\zeta\in T_m:= \partial S_m \cap\overline{V}$, and $n$
such that $v_n \in\overline{V}$, we define the quantity
%
\begin{equation}
\label{eqtauboxdecomp} f_n(\zeta) = \tau(x,\zeta) + \bar\tau(\zeta,
v_n).
\end{equation}
Let $\mathfrak{m}_n$ be the set of minimizers of $f_n$.

%
\begin{lem}
\label{lemuniquemin}
There exists $\mathfrak{m} \subset T_m$ such that $\mathfrak{m}_n =
\mathfrak{m}$ for all large $n$.
\end{lem}

\begin{pf}
First, note that $T_m \subset\{\bar{v}_i\}_i$. Therefore by
Proposition~\ref{propbusemannaxis}, for  \mbox{$\zeta,\zeta' \in T_m$},
\begin{eqnarray*}
f_n(\zeta) - f_n\bigl(\zeta'\bigr) &=&
\tau(x,\zeta) + \bar\tau(\zeta,v_n) - \tau\bigl(x,
\zeta'\bigr) - \bar\tau\bigl(\zeta',v_n
\bigr)
\\
&=& \tau(x,\zeta) - \tau\bigl(x,\zeta'\bigr) + \overline
B_n \bigl(\zeta,\zeta'\bigr)
\end{eqnarray*}
is eventually monotone. Suppose that $\zeta\in T_m$ satisfies $\zeta
\notin\mathfrak{m}_n$ for infinitely many~$n$. Then we can find
$\zeta'$ such that $f_n(\zeta)-f_n(\zeta') > 0$ for infinitely many
$n$. By monotonicity this means that actually $f_{n}(\zeta) -
f_{n}(\zeta') > 0$ for all large $n$ and thus $\zeta\notin\mathfrak
{m}_n$ for all large $n$. This also implies that if $\zeta\in
\mathfrak{m}_n$ for infinitely many $n$, then $\zeta\in\mathfrak
{m}_n$ for all large $n$, completing the proof.
\end{pf}

Given this lemma, the theorem will follow once we show that $\eta
_n((x,y))=1$ if and only if $\{x,y\}$ is in a geodesic from $x$ to a
vertex of $\mathfrak{m}_n$. Note that $T_m$ is equal to the set of
vertices in $\overline{V}$ at Euclidean distance one from $S_m$.
Applying Lemma~\ref{lemnewlemma} with $z=x$, any $\zeta\in T_m$ satisfies
\[
\zeta\in\mathfrak{m}_n\mbox{ if and only if } f_n(
\zeta) = \tau(x,v_n).
\]
So suppose first that $\eta_n((x,y)) = 1$; then $\{x,y\}$ is in a
geodesic $\gamma$ from $x$ to $v_n$. $\gamma$~has a last intersection
$\zeta$ with $T_m$. Then the segment $\bar{\gamma}$ of $\gamma$
from this intersection to $v_n$ has
\[
\tau(\zeta,v_n) = \tau(\bar{\gamma}) \geq\bar{\tau}(\zeta,v_n).
\]
But $\bar{\tau}(\zeta,v_n) \geq\tau(\zeta,v_n)$, so $\tau
(\bar{\gamma}) = \bar{\tau}(\zeta,v_n)$. Therefore
\[
\tau(x,v_n) = \tau(\gamma) = \tau(x,\zeta) + \tau(\bar{\gamma}) =
\tau(x,\zeta) + \bar{\tau}(\zeta,v_n) = f_n(\zeta),
\]
giving $\zeta\in\mathfrak{m}_n$. Furthermore the segment of $\gamma
$ up to the last intersection with $\zeta$ is a geodesic from $x$ to
$\zeta$ that contains $\{x,y\}$.

Conversely, suppose that $\{x,y\}$ is in a geodesic $\gamma_1$ from
$x$ to a vertex $\zeta$ of~$\mathfrak{m}_n$; we will show that $\eta
_n((x,y))=1$. Choose $\gamma_2$ as any geodesic from $\zeta$ to
$v_n$. Concatenate them to form a path $\gamma$ from $x$ to $v_n$. We compute
\[
\tau(\gamma) = \tau(\gamma_1) + \tau(\gamma_2) = \tau(x,
\zeta) + \tau(\zeta,v_n) \leq\tau(x,\zeta) + \bar{\tau}(\zeta,v_n) = f_n(\zeta).
\]
However since $\zeta\in\mathfrak{m}_n$, $f_n(\zeta) = \tau
(x,v_n)$, so $\tau(\gamma) \leq\tau(x,v_n)$. The opposite
inequality holds because $\gamma\dvtx  x \leadsto v_n$, so $\gamma$ is a
geodesic from $x$ to $v_n$. It remains to show that $\tau(x,v_n) \geq
\tau(y,v_n)$. But this holds because $y$ appears in $\gamma$ after
the first appearance of $x$. Therefore if we write $\sigma$ for the
segment of $\gamma$ from the first intersection with $y$ to $v_n$, then
\[
\tau(x,v_n) = \tau(\gamma) \geq\tau(\sigma) = \tau(y,v_n).
\]

\section{Geodesics graphs on $\mathbb{H}$}\label{sec4}
In this section we prove Theorem~\ref{teohalfplanegeodesics}.
Because $\mathbb{P}$ admits geodesics, Theorem~\ref{teographconvergence} implies that the sequence of graphs $(\mathbb
{G}_n)$ converge almost surely to a directed graph $\mathbb{G}$, each
of whose directed paths is a geodesic. As $\mathbb{P}$ also has unique
passage times, Proposition~\ref{propGGproperties} states that each
vertex of $\mathbb{G}_n$ has out-degree one and there are no
undirected circuits, so these same properties survive in the limit for
$\mathbb{G}$. The finiteness of backward clusters is a consequence of
nonexistence of bigeodesics in the half-plane, proved by Wehr and Woo
\cite{WW98}. Unfortunately this result was only proved under \textup{(A)}
with the additional assumption $\mathbb{E} \omega_e < \infty$, so we
provide a proof in Appendix~\ref{secWW} under either \textup{(A)}~or~\textup{(B)}.

This section is devoted to showing coalescence of directed paths in
$\mathbb{G}$. Because each vertex in $\mathbb{G}_H$ has an out-degree
of one, it suffices to show that each $\Gamma_v$~and~$\Gamma_w$
(defined in the statement of Theorem~\ref{teohalfplanegeodesics})
share a vertex. The main difficulty will be proving this statement for
all $v,w$ on the first coordinate axis; that is, the set~$L_0$, where
\[
\mbox{for }k \in\mathbb{N} \cup\{0\},\qquad L_k:= \bigl\{(x,k)\dvtx  x \in
\mathbb{Z}\bigr\}.
\]
To see why this implies coalescence for all paths, assume we have
proved this statement, and note that it suffices then to show that for
all $v,w \in V_H$ with $w \in L_0$, the geodesics $\Gamma_v$ and
$\Gamma_w$ coalesce. Write $v=(v_1,v_2)$ and consider the set
\[
\widetilde L_v = \bigl\{(v_1,y) \in V_H\dvtx  0
\leq y \leq v_2\bigr\}.
\]
With probability one, for each $v' \in\widetilde L_v$, the backward
cluster $B_{v'}$ is finite. Thus we can find $m,n \in\mathbb{Z}$ with
$m<v_1<n$ such that for all $v' \in\widetilde L_v$, both points $(m,0)$
and $(n,0)$ are not in $B_{v'}$. This means in particular that $\Gamma
_{(m,0)}$ and $\Gamma_{(n,0)}$ cannot intersect $\widetilde L_v$ and,
since they coalesce, they must meet ``above'' $v$. In other words,
$v$~is in the bounded component of $V_H \setminus(\Gamma_{(m,0)} \cup
\Gamma_{(n,0)})$ (viewing these paths only as their vertex sets). By
planarity, $\Gamma_v$ must intersect $\Gamma_{(m,0)}$. Because
$\Gamma_{(m,0)}$ coalesces with~$\Gamma_w$, this completes the proof.

\begin{figure}[b]

\includegraphics{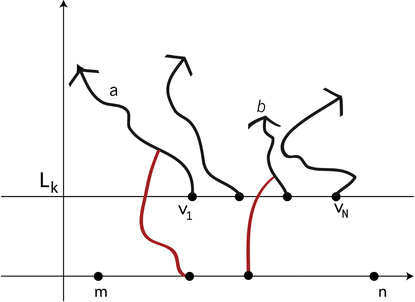}

\caption{In this example $N_{m,n}^{(k)}$ is at least $4$. The arrowed
paths are geodesics emanating from vertices on the line $L_k$. They do
not intersect each other, and they intersect $L_k$ only at their
initial points. The nonarrowed paths are segments of geodesics
starting from $L_0$. Note that the initial points of $a$ and $b$ do not
contribute to the random variable $M_{m,n}^{(k)}$.}\label{figfiguremn}
\end{figure}

So we move to proving coalescence starting from the first coordinate
axis. We will prove by contradiction, so assume either \textup{(A)}~or~\textup{(B)} but that
%
\begin{equation}
\label{eqassumption} \mbox{with positive probability, there are vertices } v,w \in
L_0\mbox{ with }\Gamma_v \cap\Gamma_w
= \varnothing.\hspace*{-25pt}
\end{equation}

\subsection{Estimates on density of disjoint geodesics}\label{sec4.1}

\subsubsection{Definitions}\label{sec4.1.1}\label{secdensitydefinitions}
For each $k \in\mathbb{N} \cup\{0\}$ and $m,n \in\mathbb{Z}$ with
$m<n$ define $N_{m,n}^{(k)}$ as the largest number $N$ such that we can
find vertices $v_1, \ldots, v_N \in[m,n] \times\{k\}$ such that:
\begin{longlist}[(a)]
\item[(a)] $\Gamma_{v_1}, \ldots, \Gamma_{v_N}$ are pairwise
disjoint, and
\item[(b)] for all $i$, $\Gamma_{v_i} \cap [L_0 \cup\cdots\cup L_k ] = \{v_i\}$.
\end{longlist}
Similarly, for $k \in\mathbb{N}$ let $M_{m,n}^{(k)}$ be the largest
$M$ such that we can find $v_1, \ldots, v_M \in[m,n] \times\{k\}$
such that (a) and (b) above hold but also (c) for all $i = 1, \ldots,M$, every $v \in L_0$ has $\Gamma_v \cap\Gamma_{v_i} = \varnothing$.
See Figure \ref{figfiguremn} for an illustration of these definitions.

\begin{lem}\label{lemkingman}
For each $k_1 \in\mathbb{N} \cup\{0\}$ and $k_2 \in\mathbb{N}$,
there exist deterministic $\alpha_{k_1},\beta_{k_2} \geq0$ such that
\[
\lim_{n \to\infty} \frac{N_{0,n}^{(k_1)}}{n} = \alpha_{k_1}\quad
\mbox{and}\quad\lim_{n \to\infty} \frac{M_{0,n}^{(k_2)}}{n} =
\beta_{k_2}\qquad\mbox{almost surely and in } L^1(\mathbb{P}).
\]
We have the characterization
\[
\alpha_{k_1} = \inf_{n \in\mathbb{N}} \frac{\mathbb{E}
N_{0,n}^{(k_1)}}{n}\quad
\mbox{and}\quad\beta_{k_2} = \inf_{n \in
\mathbb{N}}
\frac{\mathbb{E} M_{0,n}^{(k_2)}}{n}.
\]
Furthermore, assuming (\ref{eqassumption}), $\alpha_0>0$.\vadjust{\goodbreak}
\end{lem}

\begin{pf}
Note that for all $m<n<p$ in $\mathbb{Z}$ and $k_1 \in\mathbb{N}
\cup\{0\}$, $k_2 \in\mathbb{N}$, we have
\[
N_{m,p}^{(k_1)} \leq N_{m,n}^{(k_1)} +
N_{n,p}^{(k_1)}\quad\mbox {and}\quad M_{m,p}^{(k_2)}
\leq M_{m,n}^{(k_2)} + M_{n,p}^{(k_2)}.
\]
Further\vspace*{2pt} $\max\{N_{m,n}^{(k_1)},M_{m,n}^{(k_2)}\} \leq n-m+1$ surely,
so they have finite mean, and $(N_{m,n}^{(k_1)},M_{m,n}^{(k_2)})$ has
the same distribution as $(N_{0,n-m}^{(k_1)},M_{0,n-m}^{(k_2)})$.
Therefore\vspace*{1pt} we can apply Kingman's subadditive ergodic theorem to find
deterministic $\alpha_{k_1},\beta_{k_2} \geq0$ such that
\[
\frac{1}{n} N_{0,n}^{(k_1)} \to\alpha_{k_1}\quad
\mbox{and}\quad \frac{1}{n} M_{0,n}^{(k_2)} \to
\beta_{k_2}\qquad\mbox{almost surely and in } L^1(\mathbb{P}).
\]
Furthermore, $\alpha_{k_1} = \inf_{n \in\mathbb{N}} \mathbb{E}
N_{0,n}^{(k_1)}/n$ and $\beta_{k_2} = \inf_{n \in\mathbb{N}}
\mathbb{E} M_{0,n}^{(k_2)}/n$.

We claim now that under assumption (\ref{eqassumption}), $\alpha
_0>0$. By countability and invariance of $\mathbb{P}$ under
$T_{(1,0)}$, we can find $i_0 \in\mathbb{N}$ such that $\mathbb
{P}(A(1,i_0))>0$, where $A(1,i_0)$ is the event that $\Gamma_{(1,0)}$
and $\Gamma_{(i_0,0)}$ do not intersect. Note that if
$i_1<i_2<i_3<i_4$ are integers such that $\Gamma_{(i_l,0)}$ and
$\Gamma_{(i_{l+1},0)}$ are disjoint for $l=1,3$, then by planarity, at
least three of them must be disjoint. So the ergodic theorem implies
that with probability one, $A(1,i_0) \circ T_{(j,0)}$ occurs for
infinitely many $j$ and therefore we can find 4 geodesics starting from
$L_0$ that are all disjoint. The middle two of these must intersect
$L_0$ only finitely often. This implies that for some $j_0 \in\mathbb
{N}$, $\mathbb{P}(B(1,j_0))>0$, where $B(1,j_0)$ is the event that
$\Gamma_{(1,0)}$ and $\Gamma_{(j_0,0)}$ do not intersect and only
touch $L_0$ at their initial points.

Again, by the ergodic theorem,
\[
\frac{1}{N} \sum_{l=0}^N
T_{(j_0,0)}^l 1_{B(1,j_0)} \to\mathbb {P}
\bigl(B(1,j_0)\bigr)\qquad\mbox{almost surely and in }
L^1(\mathbb{P}).
\]
The reasoning given above, but applied to sets $\{j_1, j_2, \ldots\}$
of size bigger than 4, implies that for $n \in\mathbb{N}$,
\[
N_{0,j_0n}^{(0)} -1 \geq\sum_{l=0}^n
T_{(j_0,0)}^l 1_{B(1,j_0)}.
\]
Dividing by $j_0n$ and taking $n \to\infty$, we find $\alpha_0 \geq
\mathbb{P}(B(1,j_0)) / j_0 > 0$.
\end{pf}

\subsubsection{Lower bound on \texorpdfstring{$\alpha_k$}{$alpha_k$}}\label{sec4.1.2}

\begin{prop}\label{proplowerbound}
For each $k \in\mathbb{N}$, $\alpha_k \geq\beta_k + \alpha_0$.
\end{prop}

\begin{pf}
For the proof we need a lemma stating that any geodesic starting at~$L_0$ intersects $L_k$ only finitely often.

\begin{lem}\label{lemfiniteintersections}
Assume (\ref{eqassumption}). For each $v \in L_0$ and $k \in\mathbb
{N}$, with probability one, the set $\Gamma_v \cap L_k$ is finite.
\end{lem}

\begin{pf}
Assume that there exists $k \in\mathbb{N}$ such that with positive
probability, there exists $v \in L_0$ with $\Gamma_v \cap L_k$
infinite. By countability and invariance of $\mathbb{P}$ under $T_{(1,0)}$,
\[
\mathbb{P}(B) > 0\qquad\mbox{where } B = \bigl\{ \#(\Gamma_{(0,0)} \cap
L_k) = \infty \bigr\}.
\]
By Lemma~\ref{lemkingman}, we can find $N_0 \in\mathbb{N}$ such that
\[
\mathbb{P}\bigl(N_{1,N_0+1}^{(0)} > k+2\bigr) > 1-\mathbb{P}(B)/2
\]
and then by translation invariance, with positive $\mathbb
{P}$-probability, the event $B \cap\{N_{1,N_0+1}^{(0)}>k+2\} \cap\{
N_{-1-N_0,-1}^{(0)} > k+2\}$ occurs. However any outcome in this event
must have contradictory properties, as we now explain. Since $B$
occurs, $\Gamma_{(0,0)}$ must intersect infinitely many vertices of
either $L_k \cap\{(x,y)\dvtx  x \geq0\}$ or $L_k \cap\{(x,y)\dvtx  x \leq0\}
$. Let us assume the first; the subsequent argument is similar in the
other case. Then $\Gamma_{(0,0)}$ must be disjoint from at least $k+1$
different geodesics $\Gamma_{v_1}, \ldots, \Gamma_{v_{k+1}}$ with $v
_i \in L_0 \cap[1,N_0+1]$ for all $i$, but it must intersect some
vertex $(x,k)$ for $x > N_0$. By planarity, the geodesics $\Gamma
_{v_i}$ must all intersect the set $\{(x,j)\dvtx  0 \leq j \leq k\}$, but
then they cannot be disjoint. This is a contradiction.
\end{pf}

Returning to the proof of Proposition~\ref{proplowerbound}, fix $k
\in\mathbb{N}$. For each $m \in\mathbb{Z}$, define $d_k(m)$ as the
first coordinate of the last vertex (by the natural ordering) on
$\Gamma_{(m,0)}$ in the line $L_k$. This quantity exists almost surely
by Lemma~\ref{lemfiniteintersections}. For any $a,b \in\mathbb
{Z}$ with $a < b$, define the set
\[
X_{a,b} = \bigl\{j \in\mathbb{Z}\dvtx  d_k(j) \in[a,b]\bigr\}.
\]

We claim that for some fixed $N_0 \in\mathbb{N}$,
%
\begin{equation}
\label{eqclam1} \mathbb{P} \bigl( X_{-N_0,n+N_0}\mbox{ contains } [0,n]
\mbox{ for infinitely many } n \in\mathbb{N} \bigr) \geq1/2.
\end{equation}
To show this, first choose $N_0 \in\mathbb{N}$ such that $\mathbb
{P}( |d_k(0)| \leq N_0) \geq3/4$. Next note that by invariance of
$\mathbb{P}$ under $T_{(1,0)}$, $\mathbb{P}(d_k(n) \leq n + N_0) \geq
3/4$ for all $n \in\mathbb{N}$. These two events occur simultaneously
with probability at least $1/2$, so
\[
\mathbb{P} \bigl( d_k(0) \geq-N_0\mbox{ and }
d_k(n) \leq n + N_0\mbox{ for infinitely many } n
\in\mathbb{N} \bigr) \geq1/2.
\]
Last, observe that by planarity and the fact that if two $\Gamma$'s
touch, they must merge, the function $m \mapsto d_k(m)$ is monotonic.
This implies that if $d_k(0) \geq-N_0$ and $d_k(n) \leq n+N_0$, then
the set $X_{-N_0,n+N_0}$ contains $[0,n]$.

The second step is to prove that
%
\begin{equation}
\label{eqclam2} \mathbb{P} \biggl( \limsup_{n \to\infty}
\frac
{N_{0,n}^{(k)}-M_{0,n}^{(k)}}{n} \geq\alpha_0 \biggr) \geq1/4.
\end{equation}
Because $(N_{0,n}^{(k)}-M_{0,n}^{(k)})/n$ converges almost surely to
$\alpha_k-\beta_k$, this suffices to complete the proof of the
proposition. First, given $\varepsilon>0$, by Lemma~\ref{lemkingman},
pick $N_1$ such that
\[
\mathbb{P} \bigl( N_{0,n}^{(0)}/n \geq\alpha_0-
\varepsilon\mbox{ for all } n \geq N_1 \bigr) \geq3/4.
\]
On this event, for $n \geq N_1$, setting $a_n = \lfloor n(\alpha
_0-\varepsilon) \rfloor$, we may find $x_1^{(n)}, \ldots,
x^{(n)}_{a_n}$ in $[0,n]$ such that the geodesics $\Gamma
_{(x_1^{(n)},0)}, \ldots, \Gamma_{(x_{a_n}^{(n)},0)}$ are pairwise
disjoint. If, in addition, the event in (\ref{eqclam1}) occurs, then
for infinitely many $n$, all of $d_k(x_1^{(n)}), \ldots,
d_k(x_{a_n}^{(n)})$ are in $[-N_0,n+N_0]$. Note that the geodesics
emanating from each of the points $(d_k(x_i^{(n)}),k)$ are disjoint and
do not intersect $L_0 \cup\cdots\cup L_k$ except for their initial
vertices. Next, choose a maximal set $\widehat{\Gamma}_1^{(n)}, \ldots,
\widehat{\Gamma}_{M_{-N_0,n+N_0}^{(k)}}^{(n)}$\vspace*{2pt} of geodesics starting in
$[-N_0,n+N_0]\times\{k\}$ which are disjoint and intersect $L_0 \cup
\cdots\cup L_k$ only at their initial vertices, and such that no $v
\in L_0$ has $\Gamma_v \cap\widehat{\Gamma}_i^{(n)} \neq\varnothing$
for $i=1, \ldots, M_{-N_0,n+N_0}^{(k)}$. Note that these $\widehat{\Gamma
}$'s are disjoint from the geodesics starting from the points
$(d_k(x_i^{(n)}),k)$. Therefore for each $n \geq N_1$, with probability
at least $1/4$ we have $N_{-N_0,n+N_0}^{(k)} \geq a_n +
M_{-N_0,n+N_0}^{(k)}$. Thus
\[
\mathbb{P} \bigl( N_{-N_0,n+N_0}^{(k)} \geq a_n +
M_{-N_0,n+N_0}^{(k)} \mbox{ for infinitely many }n \bigr) \geq1/4.
\]
By invariance of $\mathbb{P}$ under $T_{(1,0)}$,
\[
\mathbb{P} \bigl( N_{0,n+2N_0}^{(k)} - M_{0,n+2N_0}^{(k)}
\geq a_n \mbox{ for infinitely many }n \bigr) \geq1/4.
\]
Finally, as $(n+2N_0)/n \to1$ as $n \to\infty$ and $\varepsilon$ is
arbitrary, (\ref{eqclam2}) holds.
\end{pf}


\subsubsection{Upper bound on \texorpdfstring{$\alpha_k$}{$alpha_k$}}\label{sec4.1.3}

In this section we combine the lower bound from last section with an
upper bound to conclude that $\beta_k=0$. In what follows, we will
denote by $G(x,y)$ the unique geodesic between $x$ and $y$.

%
\begin{prop}\label{propbetazero}
For $k \in\mathbb{N}$, $\alpha_k \leq\alpha_0$. Therefore by
Proposition~\ref{proplowerbound}, $\beta_k=0$.
\end{prop}

We will couple together the upper half-plane with shifted half-planes.
For any $k \in\mathbb{N}$ we consider the shifted configuration
$T_{(0,k)}\omega$ and the unique geodesics $G(v,(n,0))$ in this
configuration. Specifically, for any $\omega\in\Omega_H$ and $v \in
V_H^k = \{(x,y) \in V_H\dvtx  y \geq k\}$, we set
%
\begin{equation}
\label{eqshiftedgeodesicdef} G_n^{(k)}(v) = T_{(0,-k)} \bigl[ G
\bigl(v-(0,k),(n,0)\bigr) (T_{(0,k)} \omega ) \bigr],
\end{equation}
where for a path $\gamma$ in $\mathbb{H}$ we denote by $T_{(0,-k)}
\gamma$ the path $\gamma$ shifted up by $k$ units. By Theorem~\ref
{teographconvergence}, there is an almost sure limit $G^{(k)}(v) =
\lim_{n \to\infty} G_n^{(k)}(v)$.


%
\begin{lem}\label{lemshiftup}
Let $k \in\mathbb{N}$. With probability one, for all $v \in L_k$, if
$\Gamma_v \cap [ L_0 \cup\cdots\cup L_{k-1}  ] =
\varnothing$, then
\[
\Gamma_v = G^{(k)}(v).
\]
\end{lem}

\begin{pf}
Let $v \in L_k$ such that $\Gamma_v \cap [ L_0 \cup\cdots\cup
L_{k-1}  ] = \varnothing$ and write it as $v=(v_1,v_2)$. Let
$\sigma$ be the nonself intersecting continuous curve obtained by
concatenating (a) the edges of $\Gamma_v$, (b) the vertical line
segment connecting $(v_1,-1/2)$ and $v$ and (c) the ray $\{(x,-1/2) \in
\mathbb{R}^2\dvtx  x \geq v_1\}$. One component of the complement of
$\sigma$ contains all vertices of $L_{k-1}$ to the right of $v-(0,1)$,
and the other contains all vertices of $L_{k-1}$ to the left of
$v-(0,1)$; call the first $C_1$ and the second $C_2$. Because the
sequence $G(v,(n,0))$ converges to $\Gamma_v$ as $n \to\infty$,
there exists $N_0$ such that if $n \geq N_0$, then $G(v,(n,0))$ does
not contain any vertices of the form $(v_1,y)$ for $y<v_2$. For $n \geq
N_0$ the geodesic $G(v,(n,0))$ cannot contain any vertices in $C_2$.
For if it did, it would start at $v$, go through a vertex in $C_2$, and
then touch $(n,0)$, a vertex in $C_1$. Because this geodesic cannot
cross $\{(v_1,y)\dvtx  y < v_2\}$, it must cross $\Gamma_v$ and violate
unique passage times.

For $n \geq N_0$, let $w_n$ denote the first intersection of
$G(v,(n,0))$ with $L_{k-1}$. The vertex $v_n$ directly before this must
be in $L_k$, and the segment $\gamma_n$ of $G(v,(n,0))$ from $v$ to
$v_n$ has all vertices in $V_H^k$. Therefore writing $v_n=(a_n,k)$, we
have $\gamma_n = G_{a_n}^{(k)}(v)$. Because $\Gamma_v$ does not
intersect $L_0 \cup\cdots\cup L_{k-1}$, $\|w_n\|_1 \to\infty$.
However $w_n$ is in $C_1$, so $a_n \to+\infty$. Taking $n$ to
infinity, these segments converge to $G^{(k)}(v)$. However they
converge to $\Gamma_v$.
\end{pf}

For $n \in\mathbb{N}$, choose $r=N_{0,n}^{(k)}$ pairwise disjoint
geodesics $\Gamma_{v_1}, \ldots, \Gamma_{v_r}$ for $v_1, \ldots,
v_r \in[0,n] \times\{k\}$ such that for each $i=1, \ldots, r$,
$\Gamma_{v_i} \cap[L_0 \cup\cdots\cup L_k] = \{v_i\}$. By
Lemma~\ref{lemshiftup}, $r \leq N_{0,n}^{(0)}(T_{(0,k)}(\omega))$.
Therefore
\[
\frac{N_{0,n}^{(k)}(\omega)}{n} \leq\frac
{N_{0,n}^{(0)}(T_{(0,k)}(\omega))}{n}\qquad\mbox{for all } n \in \mathbb{N}.
\]
Taking $n \to\infty$ and using invariance of $\mathbb{P}$ under
$T_{(0,k)}$, we find $\alpha_k \leq\alpha_0$.

\subsection{Deriving a contradiction}\label{sec4.2}
In this section we will show that assuming (\ref{eqassumption}),
there exists $k \geq1$ such that $\beta_k > 0$. This will contradict
Proposition~\ref{propbetazero} and complete the proof of
coalescence starting from the first-coordinate axis.




\subsubsection{Lemmas for edge modification}\label{sec4.2.1}

The first lemma will let us apply an edge modification argument. For a
typical element $\omega$ and edge $e\in E_H$ we write $\omega=
(\omega_e,\check{\omega})$. We say an event $A \subset\Omega_H$ is
$e$-\emph{increasing} if, for all $(\omega_e,\check{\omega}) \in A$
and $r>0$, $(\omega_e+r,\check{\omega}) \in A$. The following is a
weaker version of \cite{DH12}, Lemma~6.6, and uses the upward finite
energy property.

%
\begin{lem}\label{lemmodification}
Let $\lambda>0$ be such that $\mathbb{P}(\omega_e \geq\lambda)>0$.
If $A \subset\Omega_H$ is $e$-increasing with $\mathbb{P}(A)>0$, then
\[
\mathbb{P}(A,\omega_e\geq\lambda) >0.
\]
\end{lem}

\begin{pf}
We estimate
\begin{eqnarray*}
\mathbb{P}(A,\omega_e \geq\lambda) &=& \mathbb{E} \bigl[ \mathbb
{E}\bigl[ 1_A(\omega_e,\check{\omega})
1_{\{\omega_e\geq\lambda\}} \mid\check{\omega}\bigr] \bigr]
\\
&\geq&\mathbb{E} \bigl[ 1_A(\lambda,\check{\omega}) \mathbb {P}(
\omega_e\geq\lambda\mid\check{\omega}) \bigr].
\end{eqnarray*}
Because $A$ is $e$-increasing, the variable $1_A 1_{\{\omega_e \leq
\lambda\}}$ is less than or equal to the random variable $1_A(\lambda,\check{\omega})$. Therefore if the statement of the lemma is false,
then $1_A(\lambda,\check{\omega})$ is positive on a set of positive
probability. By the upward finite energy property, $\mathbb{P}(\omega
_e \geq\lambda\mid\check{\omega})$ is positive almost surely, so
the above estimates give $\mathbb{P}(A,\omega_e \geq\lambda)>0$, a
contradiction.
\end{pf}

The second lemma is a shape theorem-type upper bound. For it, we define
%
\begin{equation}
\label{eqlambda} \lambda_0^+ = \sup\bigl\{\lambda\geq0\dvtx  \mathbb{P}(
\omega_e \geq \lambda) > 0\bigr\}.
\end{equation}

%
\begin{lem}\label{lemmodifiedshapetheorem}
Suppose that $\lambda_0^+<\infty$. There exists $c^+< \lambda_0^+$
such that
\[
\mathbb{P} \bigl( \tau(0,x) \leq c^+ \|x\|_1\mbox{ for all but
finitely many } x \in V_H \bigr) = 1.
\]
\end{lem}

\begin{pf}
Because $\mathbb{P}$ has unique passage times, the marginal of $\omega
_e$ is not concentrated at a point and therefore $\mathbb{E} \omega_e
< \lambda_0^+$. For any $x \in V_H$ choose a deterministic path
$\gamma_x\dvtx  0 \leadsto x$ in $\mathbb{H}$ with $\|x\|_1$ number of
edges. Then
\[
\mathbb{E} \tau(0,x) \leq\mathbb{E} \tau(\gamma_x) = \|x
\|_1 \mathbb{E}\omega_e.
\]
We now set $c^+ = \frac{\mathbb{E}\omega_e + \lambda_0^+}{2}$ and
argue that this value satisfies the condition of the lemma. The
argument will be similar to the proof of the shape theorem in the full space.

For any $z \in\mathbb{Q}^2$ with second coordinate nonnegative, let
$N$ be any natural number such that $Nz \in V_H$. Then for $n \in
\mathbb{N}$, write $n=\lfloor\frac{n}{N} \rfloor+ r$, where $0\leq
r < N$ and estimate
\[
\tau(0,nz) \leq N \lambda_0^+ \|z\|_1 + \sum
_{i=0}^{\lfloor
{n}/{N} \rfloor-1} \tau(0,Nz) \bigl(T^i_{Nz}
\omega\bigr).
\]
Divide by $n$ and use the ergodic theorem to find
%
\begin{equation}
\label{eqlimsup} \limsup_{n \to\infty} \frac{\tau(0,nz)}{n} \leq
\frac{\mathbb{E}
\tau(0,Nz)}{N} \leq\|z\|_1 \mathbb{E} \omega_e.
\end{equation}
Let $\Omega'_H$ be the full-probability event on which (\ref{eqlimsup}) holds for all $z \in\mathbb{Q}^2$ with second coordinate
nonnegative. Assume by way of contradiction that on some positive
probability event $A$, the lemma does not hold for the $c^+$ fixed
above. Then we can find $\omega\in A \cap\Omega'_H$; we will show
that this $\omega$ has contradictory properties.

Let $(z_n)$ be a sequence of vertices in $V_H$ such that $\|z_n\|_1 \to
\infty$ and
\[
\tau(0,z_n) > c^+ \|z_n\|_1\qquad\mbox{for
all } n \in\mathbb{N}.
\]
By compactness (and by restricting to a subsequence), given a positive
$a$ such that $a\lambda_0^+< c^+-\mathbb{E}\omega_e$, we can find
some $z \in\mathbb{Q}^2$ with second coordinate nonnegative and
\[
\|z\|_1=1\mbox{ such that } \biggl\llVert \frac{z_n}{\|z_n\|
_1}-z
\biggr\rrVert _1 < a\qquad\mbox{for all } n \in\mathbb{N}.
\]
Then we can estimate
\[
\tau(0,z_n) \leq\tau\bigl(0,\|z_n\|_1z\bigr) + \tau\bigl(
\|z_n\|_1z,z_n\bigr) \leq\tau \bigl(0,\|z_n
\|_1z\bigr) + \bigl\| \|z_n\|_1z-z_n
\bigr\| _1 \lambda_0^+.
\]
Therefore
\[
c^+ < \frac{\tau(0,z_n)}{\|z_n\|_1} \leq\frac{\tau(0,\|z_n\|
_1z)}{\|z_n\|_1} + \biggl\llVert z -
\frac{z_n}{\|z_n\|_1} \biggr\rrVert _1 \lambda_0^+.
\]
Taking limsup on the right-hand side gives $c^+ \leq\mathbb{E} \omega
_e + a \lambda_0^+$, a contradiction.
\end{pf}

The final lemma deals with spatial concentration of geodesics emanating
from the first coordinate axis. For $v_1, v_2, v_3 \in L_0$ let
$B(v_1,v_2,v_3)$ be the event that:
\begin{longlist}[(3)]
\item[(1)] the geodesics $\Gamma_{v_1}, \Gamma_{v_2}$ and $\Gamma
_{v_3}$ are disjoint;
\item[(2)] they intersect $L_0$ only at their initial points;
\item[(3)] their intersection with each $L_k$ is finite.
\end{longlist}
We will also need a subevent of $B(v_1,v_2,v_3)$. Let
\[
B^G(v_1,v_2,v_3) = \lleft
\{ \matrix{ B(v_1,v_2,v_3) \mbox{ occurs and for each } \varepsilon>0,
\vspace*{1pt}\cr
\mbox{there are infinitely many } k \in\mathbb{N} \mbox{ such that}
\vspace*{1pt}\cr
\mbox{the last intersections }\zeta_k \mbox{ and }\zeta_k'\mbox{ of}
\vspace*{1pt}\cr
\Gamma_{v_1}\mbox{ and }\Gamma_{v_3}\mbox{ with }L_k\mbox{ have } \|\zeta_k - \zeta_k'\|_1 < \varepsilon k }%
\rright\}.
\]

\begin{lem}\label{lemangularconcentration}
Suppose $v_1=(x_1,0), v_2=(x_2,0)$ and $v_3=(x_3,0)$ with
$x_1<x_2<x_3$. Then $\mathbb{P}(B^G(v_1,v_2,v_3) \mid B(v_1,v_2,v_3))
= 1$.
\end{lem}

\begin{pf}
For $z \in L_0$ and $k \in\mathbb{N}$, denote by $\zeta_k(z)$ the
last point of intersection of $\Gamma_z$ with $L_k$, which exists
almost surely by Lemma~\ref{lemfiniteintersections}. Take
$v=v_3-v_1$ and consider
\[
C_k(v) = \bigl\{\bigl\|\zeta_k(v) - \zeta_k(0)
\bigr\|_1 \geq\varepsilon k \bigr\}.
\]
For\vspace*{2pt} $k, n \in\mathbb{N}$, define $X_n^{(k)} = \sum_{j=0}^{n-1}
1_{C_k(v)}(T_{(jd,0)}(\omega))$, where $d=\|v\|_1+1$. By the ergodic
theorem, putting $p_k = \mathbb{P}(C_k(v))$,
%
\begin{equation}
\label{eqnachos22} X_n^{(k)}/n \to p_k \qquad
\mbox{almost surely}.
\end{equation}

As previously stated in the paper, for $l \in\mathbb{Z}$ and $k \in
\mathbb{N}$, define $d_k(l)$ as the first coordinate of $\zeta_k(l)$,
and note that by planarity, $d_k(l)$ is monotone in $l$. Therefore for
$n \in\mathbb{N}$, the difference $d_k(nd) - d_k(0)$ is at least
equal to $\varepsilon k X_n^{(k)}$, so
\[
\frac{d_k(nd) - nd - d_k(0)}{n} \geq\frac{\varepsilon k X_n^{(k)} -
nd}{n} = \varepsilon k X_n^{(k)}/n
- d.
\]
Combining with (\ref{eqnachos22}), almost surely,
\[
\liminf_{n \to\infty} \frac{d_k(nd)-nd-d_k(0)}{n} \geq\varepsilon k
p_k - d.
\]
Because $d_k(nd)-nd$ and $d_k(0)$ have the same distribution,
$(d_k(nd)-nd - d_k(0))/n \to0$ in probability. Therefore
\[
p_k \leq d/(\varepsilon k),
\]
giving $p_k \to0$. In particular, with probability one, $C_k(v)^c$
occurs for infinitely many $k$.
\end{pf}

\subsubsection{Main argument}\label{sec4.2.2}

We will first assume that $\lambda_0^+<\infty$ and that (\ref{eqassumption}) holds. By Proposition~\ref{proplowerbound}, $\alpha
_0>0$ and so we can find $v_1,v_2,v_3$ and $p>0$ such that $\mathbb
{P}(B(v_1,v_2,v_3))\geq p$, where this event was defined before
Lemma~\ref{lemangularconcentration}. Fix any positive
%
\begin{equation}
\label{eqepsilondef} \varepsilon< \frac{\lambda_0^+-c^+}{8\lambda_0^+}.
\end{equation}

We first define a modified event which combines conditions from the
previous section. Specifically, for $k \in\mathbb{N}$ we set $B'(k) =
B'(v_1,v_3;k)$ as the event that:
\begin{longlist}[(4)]
\item[(1)] the geodesics $\Gamma_{v_1}$ and $\Gamma_{v_3}$ are
disjoint and intersect $L_j$ in a finite set for all $j \in\mathbb{N}
\cup\{0\}$;
\item[(2)] writing $w_1=w_1(k)$ and $w_3=w_3(k)$ for the last
intersections of $\Gamma_{v_1}$ and $\Gamma_{v_3}$ with $L_k$, there
is a vertex $x^*$ in $L_k$ between $w_1$ and $w_3$ such that $\Gamma
_{x^*}$ is disjoint from $\Gamma_{v_1}$ and $\Gamma_{v_3}$, and
$\Gamma_{x^*}$ intersects $L_k$ only at $x^*$;
\item[(3)] the finite geodesics $r_1(k)$ and $r_3(k)$, defined as the
segments of $\Gamma_{v_1}, \Gamma_{v_3}$ from $L_0$ to each of $w_1$
and $w_3$ satisfy $\tau(r_i(k)) \leq c^+ \|v_i-w_i\|_1$ for $i=1,3$;
\item[(4)] $\| w_1- w_3\|_1 < \varepsilon k$. (See Figure \ref{figevent}.)
\end{longlist}

\begin{figure}

\includegraphics{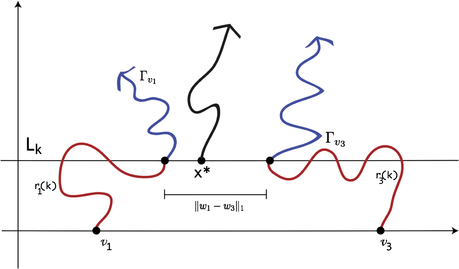}

\caption{The event $B'(k)$. The geodesics $\Gamma_{v_i}, i =1,3$, are
the left and right paths. The central geodesic $\Gamma_{x^*}$ does not
intersect either $\Gamma_{v_1}$ or $\Gamma_{v_3}$ and intersects
$L_k$ only at $x^*$. The initial segments of $\Gamma_{v_1}$ and
$\Gamma_{v_3}$ satisfy $\tau(r_i(k)) \leq c^+ \|v_i-w_i\|_1$ while
$\| w_1- w_3\|_1 < \varepsilon k$.}\label{figevent}
\end{figure}

The first two conditions hold together for all $k$ simultaneously with
probability at least $p$. This is because whenever $B(v_1,v_2,v_3)$
occurs, almost surely each $\Gamma_{v_i}$ intersects each $L_k$ in a
finite set, so we can let $x^*$ be the last intersection point of
$\Gamma_{v_2}$ with $L_k$. Next, by Lemma~\ref{lemmodifiedshapetheorem} we can find $k_0$ such that
\[
\mathbb{P}\Biggl(\tau(v_i,w) \leq c^+\|v_i-w
\|_1\mbox{ for all } i = 1, 3\mbox{ and } w \in\bigcup
_{k=k_0}^\infty L_k\Biggr) > 1-p/2.
\]
This implies that the first three conditions hold for all $k \geq k_0$
with probability at least $p/2$. Using Lemma~\ref{lemangularconcentration},
%
\begin{equation}
\label{eqbnprime} \mathbb{P}\bigl(B'(k)\bigr) > 0\qquad\mbox{for
infinitely many } k \geq k_0.
\end{equation}
We then fix any such $k \geq k_0$ with
%
\begin{equation}
\label{eqNfix} 4\|v_3-v_1\|_1
\lambda_0^+ < \frac{\lambda_0^+-c^+}{2} k.
\end{equation}

Next we modify the edge-weights for a set of edges between the
geodesics $\Gamma_{v_1}$ and $\Gamma_{v_3}$. For any configuration
$\omega$ in $B'(k)$ write $X_1$ for the closed subset of $\mathbb
{R}^2$ with boundary curves $\Gamma_{v_1}$, $\Gamma_{v_3}$ and the
segment of the first coordinate axis between $v_1$ and $v_3$. Let $X_2$
be the component of $X_1 \cap\{(x,y) \in\mathbb{R}^2\dvtx  0 \leq y \leq
k\}$ containing $v_1$. Last, define the set $X \subset E_H$ consisting
of all edges not in $\Gamma_{v_1}$ or $\Gamma_{v_3}$ but such that
both endpoints are in $X_2$. Because there are only countably many
choices, (\ref{eqbnprime}) implies there is a deterministic choice
$X'$ and a vertex $y \in L_k$ such that
%
\begin{equation}
\label{eqbnprime2} \mathbb{P}\bigl(B'(k),  X=X', x^*=y
\bigr) > 0.
\end{equation}
Here the notation $x^*=y$ means that the (deterministic) vertex $y$
satisfies condition~(2) of the definition of $B'(k)$.

We next show that
%
\begin{equation}
\label{eqpremodification} \mathbb{P} \biggl( B'(k), X=X',x^*=y,
\bigcap_{e \in X'} \biggl\{ \omega_e \geq
\frac{c^+ + \lambda_0^+}{2} \biggr\} \biggr) > 0.
\end{equation}
To prove this we enumerate the edges $e_1, \ldots, e_r$ of $X'$ and
repeatedly apply Lemma~\ref{lemmodification}. By (\ref{eqbnprime2}), we simply need to verify that for all $j=2, \ldots, r$,
\[
B'(k) \cap\bigl\{X=X',x^*=y\bigr\} \cap\bigcap
_{i=1}^{j-1} \biggl\{ \omega _{e_i}
\geq\frac{c^++\lambda_0^+}{2} \biggr\}\qquad\mbox{is $e_j$-increasing}.
\]
So take $\omega$ in the event on the left for some $j=2, \ldots, r$
with $\omega'$ such that $\omega'_f=\omega_f$ for $f \neq e_j$ and
$\omega'_{e_j} \geq\omega_{e_j}$. First we claim that $\Gamma
_{v_1}$, $\Gamma_y$ and $\Gamma_{v_3}$ are unchanged from $\omega$
to $\omega'$. To see this, note that since $e_j$ is not in $\Gamma
_{v_1}$, $\Gamma_y$ or $\Gamma_{v_3}$ we can find $n_1=n_1(\omega)$
such that if $n \geq n_1$ then $e_j$ is also not in any of the
geodesics $G(v_1,(n,0))$, $G(y,(n,0))$ or $G(v_3,(n,0))$ in $\omega$.
Therefore these remain geodesics in $\omega'$; taking the limit as $n
\to\infty$ proves the claim. Now it is clear that $X=X'$ in $\omega
'$ and conditions (1)--(4) of $B'(k)$ hold in $\omega'$. Obviously if
$\omega_{e_i}\geq(1/2) (c^+ + \lambda_0^+)$ for $i=1, \ldots, j-1$
in $\omega$, then this is still true in $\omega'$. This proves (\ref
{eqpremodification}).

On the event in (\ref{eqpremodification}), no point $v \in L_0$ can
have $\Gamma_v \cap\Gamma_y \neq\varnothing$. We will now argue for
this fact and explain why it leads to a contradiction. If such a $v$
exists, it must be on the segment of $L_0$ strictly between $v_1$ and
$v_3$; this is a direct consequence of planarity and the fact that each
vertex in $\mathbb{G}_H$ has out degree one. Therefore $\Gamma_v$
must start at $L_0$ and use only edges in $X'$ until its exit from $L_0
\cup\cdots\cup L_k$. Writing $w$ for the first vertex of $\Gamma_v$
in $L_k$, we must then have
%
\begin{equation}
\label{eqlowerboundpassagetime} \tau(v,w) \geq\frac{c^+ + \lambda_0^+}{2} \|v-w\|_1.
\end{equation}
On the other hand, we can give an upper bound for the passage time from
$v$ to $w$ by taking the path obtained by concatenating (a) the segment
of $L_0$ from $v$ to $v_1$, (b) the geodesic $r_1$ and (c) the segment
of $L_k$ from $w_1$ to $w$. We get the bound
\begin{eqnarray*}
\tau(v,w) &\leq& \bigl[ \|v_3-v_1\|_1 + \varepsilon
k \bigr] \lambda _0^+ + c^+\|v_1-w_1
\|_1
\\
&\leq&2 \bigl[ \|v_3-v_1\|_1 + \varepsilon k \bigr]
\lambda_0^+ + c^+\| v-w\|_1.
\end{eqnarray*}
Combining this with (\ref{eqlowerboundpassagetime}), we find
\[
\bigl(\lambda_0^+ - c^+\bigr)k \leq4 \bigl[ \|v_3-v_1
\|_1 + \varepsilon k \bigr] \lambda_0^+.
\]
This contradicts (\ref{eqepsilondef}) and (\ref{eqNfix}).

To summarize, we have now shown that for some fixed $w_1,w_2,w_3 \in
L_k$ such that the segment of $L_k$ between $w_1$ and $w_3$ contains
$w_2$, $C=C(w_1,w_2,w_3)$ has positive probability, where this event is
defined by the conditions:
\begin{longlist}[(2)]
\item[(1)] $\Gamma_{w_1},\Gamma_{w_2}$ and $\Gamma_{w_3}$ are
disjoint and intersect $L_0 \cup\cdots\cup L_k$ only in $w_1$, $w_2$
and $w_3$, respectively, and
\item[(2)] no $v \in L_0$ has $\Gamma_{w_2} \cap\Gamma_v \neq
\varnothing$.
\end{longlist}
Fix any $m, n \in\mathbb{Z}$ with $m<n$ and $w_1,w_3 \in[m,n] \times
\{k\}$. Let $l \in\mathbb{N}$ be bigger than $\|w_3-w_1\|_1$, and
recall the notation $M_{m,n}^{(k)}$ from Section~\ref{secdensitydefinitions}. Note that if $C \cap T_{(l,0)} C$ occurs, then
$M_{m,n + l}^{(k)} \geq2$. Iterating this reasoning, for any $j \in
\mathbb{N}$,
\[
M_{m,n+jl}^{(k)}(\omega) \geq\sum_{i=0}^{j-1}
1_C\bigl(T^i_{(l,0)} \omega \bigr).
\]
Diving by $j$ and using the ergodic theorem gives $\beta_k>0$, a
contradiction. This proves that assumption (\ref{eqassumption}) is
false in the case $\lambda_0^+<\infty$ and thus all geodesics
starting from $L_0$ coalesce.

In the case that $\lambda_0^+=\infty$, the argument is much easier,
and we will just explain the idea. If (\ref{eqassumption}) holds,
then we still find $v_1,v_2,v_3$ in $L_0$ with $v_2$ in the segment of
$L_0$ between $v_1$ and $v_3$ and such that the $\Gamma_{v_i}$'s are
disjoint and intersect $L_0$ in only $v_1,v_2$ and $v_3$. Again pick
$y$ as the last intersection point of $\Gamma_{v_2}$ with $L_1$.
Letting $S$ be the set of edges touching any vertex of $L_0$ between
$v_1$ and $v_3$ (and therefore not in $\Gamma_{v_1}$ or $\Gamma
_{v_3}$), we then modify the edge-weights for edges in $S$ to be larger
than some $C_{\mathrm{big}} > 0$. Using Lemma~\ref{lemmodification} we can
find $C_{\mathrm{big}}$ large enough so that on this event, no vertex $v$ of
$L_0$ can have $\Gamma_v \cap\Gamma_y \neq\varnothing$. As before,
this implies $\beta_1>0$, a~contradiction.

\begin{appendix}
\section{Dual edge boundary of $V$}\label{sec5}\label{secdualedge}

For any set $V_1 \subseteq\mathbb{Z}^2$, let $F$ be the edge boundary
of $V_1$,
\[
F = F(V_1) = \bigl\{\{x,y\}\dvtx  x \in V_1, y \in
V_1^c\bigr\}.
\]

%
\begin{prop}\label{propdualedge}
Let $V_1 \subseteq\mathbb{Z}^2$ be infinite, connected and such that
$V_1^c$ is infinite and connected. The dual edge set $F^*$ consists of
a single doubly infinite dual path which is nonself intersecting. That
is, it is connected and infinite, and each dual vertex $v^*$ in $W^*$,
the set of endpoints of dual edges in $F^*$, has degree exactly 2 in
the connected infinite graph $G^*= (W^*,F^*)$.
\end{prop}

\begin{pf}
Assume first that $G^*$ has a cycle. We can then extract from this
cycle a self-avoiding one, whose parametrization yields a Jordan curve.
This curve must contain a vertex of $\mathbb{Z}^2$ in its interior,
showing that either $V_1$ or $V_1^c$ must be finite, a~contradiction.

Next we prove that each dual vertex $v^* \in W^*$ has degree 2 in
$G^*$. If $v^*$ has degree 1, then it has one incident dual edge $e^*
\in F^*$, and this is dual to an edge $e \in F$. One endpoint of $e$ is
in $V_1$ and one is in $V_1^c$, but they can be connected outside of
$F$ using the 3 other edges dual to those which have $v^*$ as an
endpoint, a contradiction. This means each $v^* \in W^*$ has degree at
least 2 in $G^*$. However if $v^*$ has degree at least 3 in $G^*$, then
three such dual edges $e_1^*,e_2^*$ and $e_3^*$ incident to $v^*$ are
the first edges of disjoint self-avoiding infinite dual paths $P_1,
P_2, P_3$. These paths split $\mathbb{Z}^2$ into at least 3
components, violating the fact that $(\mathbb{Z}^2, \mathcal{E}^2)
\setminus F$ has two components.

Last we must show that $G^*$ is connected. Indeed, if $G^*$ were not
connected, it would have two components $G_1^*, G_2^*$ (and possibly
others). Since each dual vertex of $G_i^*$ must have degree two, and
since there can be no cycles, $G_1^*$ and $G_2^*$ must be disjoint,
self-avoiding, doubly infinite dual paths. But this breaks $\mathbb
{Z}^2$ into at least three components, a contradiction.
\end{pf}

\section{Existence of geodesics}\label{sec6}\label{secexistence}
In this section, we prove that if $\mathbb{P}$ is a product measure
and $x$ and $y$ are arbitrary vertices of $V$, then there almost surely
exists a (finite) geodesic between $x$ and $y$. For $V = \mathbb{Z}^2$
this was proved by Wierman and Reh \cite{RW78}; for general $d$, this
appears to be open; see the remark under Theorem~8.1.8 in \cite{Z99}.
The proof will rely on the following ``partial shape theorem.''

\begin{lem}
\label{lempartialshape}
Assume that $\mathbb{P}(\omega_e = 0) < 1/2$. Then, with probability one,
\[
\liminf_{\|x\|_1 \rightarrow\infty} \frac{\tau(0,x)}{\|x\|_1} > 0.
\]
\end{lem}

\begin{pf}
Because $(V,E)$ is a subgraph of $(\mathbb{Z}^2, \mathcal{E}^2)$, it
suffices to show the lemma in the first-passage model on $\mathbb
{Z}^2$. So let $(\omega_e)$ be a passage time realization on $\mathcal
{E}^2$, and define the truncated $\hat{\omega}_e = \min\{\omega
_e,1\}$, with $\hat{\tau}$ the passage time in the environment $(\hat
{\omega}_e)$. Then by the shape theorem (see \cite{NP96}, Theorem~1,
and the references therein), the lemma holds for $\hat{\tau}$.
However, $\tau\geq\hat{\tau}$, so we are done.
\end{pf}

\begin{teo}
Let $x$ and $y$ be elements of $V$. Then, almost surely, there exists a
geodesic $\gamma\dvtx  x \leadsto y$.
\end{teo}

\begin{pf}
The proof will be broken up into two cases, depending on the
probability that $\omega_e = 0$. In both cases, we will show that if
we write for $N \in\mathbb{N}$,
\[
\tau_N(x,y) = \mathop{\min_{\gamma\dvtx  x \leadsto y}}_{\gamma
\subseteq (x + [-N,N]^2  )\cap V}
\tau(\gamma),
\]
then
%
\begin{equation}
\label{eqfinitesetmin} \mathbb{P}\bigl(\tau_N(x,y) = \tau(x,y)\mbox{ for all
large } N\bigr) = 1.
\end{equation}
This suffices to prove the theorem, as a function on a finite set
attains its minimum.
\begin{longlist}[\textit{Case} I:]
\item[\textit{Case} I:] $\mathbb{P}(\omega_e = 0) < 1/2$. In this case, we fix
some deterministic path $\gamma_0$ in $V$ connecting $x$ and $y$ and
define $N = N(\tau(\gamma_0))$ to be the smallest number such that
\[
\min_{z \in V \setminus(x + [-N,N]^2)} \tau(x,z) > \tau(\gamma_0).
\]
Note that $N$ is almost surely finite by Lemma~\ref{lempartialshape}. Then no path containing a vertex of $V \setminus
(x+ [-N,N]^2)$ can have passage time less than or equal to $\tau
(x,y)$. In particular, (\ref{eqfinitesetmin}) holds.

\item[\textit{Case} II:] $\mathbb{P}(\omega_e = 0) \geq1/2$. Choose a
deterministic $N_0>1$ such that there exists a path connecting $x$ and
$y$ lying\vspace*{1pt} entirely in $[-N_0, N_0]^2 \cap V$. We will consider $\mathbb
{P}$ to actually be defined on $\mathbb{R}^{\mathcal{E}^2}$, though
of course the weights of edges outside of $E$ will have no bearing on
the first-passage model in $(V,E)$.

Consider a sequence of annuli $A_n \subseteq\mathbb{R}^2$ of the form
\[
A_n = \bigl[-N_0^{n+1},N_0^{n+1}
\bigr]^2 \setminus\bigl(-N_0^{n},
N_0^n\bigr)^2;
\]
denote by $G_n$ the event that there is a (vertex) self-avoiding
circuit $\alpha$ in $A_n$ of edges~$e$ such that $\omega_e = 0$. By
the RSW theorem for independent percolation (see~\cite{BR06}, Section~3.1), we have
\[
\mathbb{P} \Biggl( \bigcup_{n=1}^\infty
G_n \Biggr)=1.
\]

For any $N \in\mathbb{N}$ write $L_N = N_0^{N+1}$. For a given
$\omega$ such that $G_N$ occurs, choose $\alpha$ as above, and
consider it as a continuous plane curve. Further, let $\gamma$ be any
vertex self-avoiding\vspace*{1pt} path in $(V,E)$ from $x$ to $y$. We will show that
there exists another path $\gamma'$ in $[-L_N,L_N]^2$ from $x$ to $y$
such that $\tau(\gamma') \leq\tau(\gamma)$. This suffices to
complete the proof. To do so, we use the following construction. Let
$\beta$ be any path from $x$ to $y$ in $(V,E)$ lying entirely in
$[-N_0,N_0]^2$. Since $\gamma$ intersects $\beta$ at $x$ and $y$ we
may list their common vertices in order (along $\gamma$) as $x=x_1,
\ldots, x_k=y$. We proceed along $\gamma$ from each $x_i$ to
$x_{i+1}$, calling this subpath $\gamma_i$. If $\gamma_i$ is not just
one edge of $\beta$, we create a Jordan curve $C$ by concatenating the
portion of $\beta$ from $x_i$ to $x_{i+1}$ with $\gamma_i$. If
$\alpha$ intersects the interior of $C$, then we choose any common
point $p$ and proceed in both directions along $\alpha$ from it. In
each direction we must meet $C$ again; otherwise $\alpha$ was in the
interior of $C$, which is false. Furthermore we meet $C$ before we meet
$\Upsilon$, since $\Upsilon$ is in the exterior of $C$. Therefore the
component of $\alpha\cap\operatorname{int} C$ containing $p$ is a
segment of $\alpha$ from some vertex $a$ to another $b$. Since $a$ and
$b$ are in $C$, they must be in $\gamma_i$, and we can replace the
segment of $\gamma_i$ from $a$ to $b$ with this segment of $\alpha$.
In this way we obtain a new path we call $\tilde{\gamma}_i$ and
corresponding Jordan curve $\widetilde{C}$. Note that $\tau(\tilde{\gamma}_i) \leq\tau(\gamma_i)$. See Figure~\ref
{figgeodesicsexist} for a depiction of this procedure.

\begin{figure}

\includegraphics{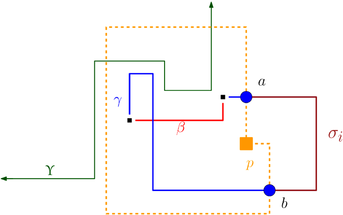}

\caption{Modifying the path $\gamma$ by replacing a segment $\sigma
_i$ of $\gamma$ with a segment of~$\alpha$. In the figure, $\alpha$~is the dotted path and $p$ is a point on $\alpha$ in the interior of
$C$, the Jordan curve formed by the union of $\gamma_i$ with $\beta$.}
\label{figgeodesicsexist}
\end{figure}

It remains to show that the procedure defined above eventually
terminates in some path $\hat{\gamma}_i$ and Jordan curve $\widehat{C}$.
At this point $\alpha$ will not intersect the interior of $\widehat{C}$,
implying that $\hat{\gamma}_i$ does not leave $[-L_N,L_N]^2$. To
prove this, assume that $p \in\alpha\cap\operatorname{int}C$ and
define $a$ and $b$ as above. Let $\sigma_i$ be the segment of $\gamma
_i$ from $a$ to~$b$. If $\sigma_i$ does not leave $\alpha$, then it
must be the complementary segment of $\alpha$ from $a$ to~$b$,
implying that $\alpha\subset(C \cup\operatorname{int}C)$. Then
$\operatorname{int}\alpha\subset\operatorname{int} C$, a
contradiction, since $\beta$ is in the interior of $\alpha$.
Therefore we can find some edge adjacent to $\alpha$ in $\sigma_i$.
When we construct $\hat{\gamma}_i$, we remove this edge from $\gamma
_i$ and only add edges of $\alpha$. Since there are only finitely many
edges adjacent to $\alpha$, the process terminates.\quad\qed
\end{longlist}\noqed
\end{pf}

\section{Absence of bigeodesics in $\mathbb{H}$}\label{sec7}\label{secWW}
In this section we outline the modifications needed to carry over the
proof of the main theorem of \cite{WW98} to our setting. An infinite
geodesic indexed by $\mathbb{Z}$ is called a bigeodesic. When we
assume unique passage times, such a path is (vertex) self-avoiding.

\subsection{Lemmas from Wehr--Woo}\label{sec7.1}
Assume either \textup{(A)} or \textup{(B)}, and let $K^*$ be the event
\[
K^* = \{\mbox{there exists a bigeodesic}\}.
\]
Note that for all $x$, $\mathbb{P}(\#B_x = \infty, (K^*)^c) = 0$,
where $B_x$ was defined in Theorem~\ref{teohalfplanegeodesics}. By
horizontal translation ergodicity, $\mathbb{P}(K^*)$ is zero or one;
let us assume for a contradiction that $\mathbb{P}(K^*)=1$.

Any bigeodesic $\gamma$ divides $\mathbb{R}^2 \setminus\gamma$ into
two components, say $R^+=R^+(\gamma)$ and $R^- = R^-(\gamma)$; that is,
\begin{eqnarray*}
R^+(\gamma) \cap R^-(\gamma) &=& \varnothing,
\\
R^+(\gamma) \cup R^-(\gamma) &=& \mathbb{R}^2 \setminus\gamma,
\\
\partial R^+ &=& \partial R^- = \gamma,
\end{eqnarray*}
where $R^-$ is a region that contains $(0,-1)$ and where $\partial A$
denotes the usual boundary of a set $A \subset\mathbb{R}^2$. Hence by
unique passage times, for any points $x,y \in R^-(\gamma)$, no bond
$b$ belonging to the finite geodesic $G(x,y)$ can be an element of
$R^+(\gamma)$. The following is \cite{WW98}, Proposition~4.

\begin{prop}\label{proplowestdef}
Consider the sequence $G((-n,0),(n,0))$ for $n \in\mathbb{N}$. With
probability 1, this sequence has a limit
\[
\gamma_0 = \lim_{n \to\infty} G\bigl((-n,0),(n,0)\bigr).
\]
Moreover, $\gamma_0$ is a bigeodesic, and for any bigeodesic $\gamma$,
\[
\gamma_0 \subset \bigl[ R^-(\gamma) \cup\gamma \bigr].
\]
\end{prop}

\begin{pf}
The same proof as in \cite{WW98} works here. The only assumption
needed is that of unique passage times.
\end{pf}

The next is \cite{WW98}, Lemma~5.

%
\begin{lem}\label{lemleavestrip}
Let $n \in\mathbb{N}$ and $\mathbb{H}' = \{(x_1,x_2) \in\mathbb
{R}^2\dvtx  x_2 \leq n\}$. With probability 1, for any bigeodesic $\gamma$
intersecting $z=(z_1,z_2)$ with $z_2 < n$,
\[
\mathbb{H}' \cap R^+(\gamma) \neq\varnothing\mbox{ and all its
components are bounded}.
\]
The boundary of each component is a self-avoiding loop, which is a
bond-disjoint union of segments of $\gamma$ and segments of the
boundary of $\mathbb{H}'$.
\end{lem}

\begin{pf}
Because we do not assume independence of the variables $(\omega_e)$,
we must modify the proof of \cite{WW98}, replacing independence with
the upward finite energy property.

In order to prove the boundedness of each component of $\mathbb{H}'
\cap R^+(\gamma)$, it is sufficient to prove that
%
\begin{equation}
\label{eqnostrip} \mathbb{P} \bigl( \mbox{there is a bigeodesic with an infinite
connected part in } \mathbb{H}'\bigr) = 0.\hspace*{-20pt}
\end{equation}

For each $k \in\mathbb{Z}$ consider a rectangular box
\[
C_k = C_k(m,n) = \bigl\{(x_1,x_2)\dvtx  2km \leq x_1 \leq(2k+1)m, 0 \leq x_2 \leq n\bigr\}.
\]
Let $T_k$ be the minimum passage time of all paths in $C_k$ which start
at a vertex in the left boundary of $C_k$ and end at a vertex in the
right boundary of $C_k$, without intersecting the top boundary. Let
$\widehat{C}_k$ for the set of edges in $\partial C_k$ that do not lie on
the first coordinate axis; then set
\[
E_k = \biggl\{ \sum_{e \in\widehat{C}_k}
\tau_e < T_k \biggr\}.
\]

We claim that for some $m$ large enough, $\mathbb{P}(E_k) > 0$ for all
$k$. To prove this, we consider two cases. Assume first that $\lambda
_0^+$, defined in (\ref{eqlambda}), is finite. Then by the ergodic
theorem, writing $e_k = \{(k,0),(k+1,0)\}$, $(1/m) \sum_{k=0}^{m-1}
\omega_{e_k} \to\mathbb{E}\omega_e$. Therefore, using the bound
$\omega_e \leq\lambda_0^+$,
\[
\lim_{m \to\infty} \frac{1}{m} \sum
_{e \in\widehat{C}_0} \omega_e = \mathbb{E}
\omega_e.
\]
As $\mathbb{P}$ has unique passage times, $\mathbb{E} \omega_e <
\lambda_0^+$, so choose $m$ such that
\[
\mathbb{P} \biggl( \sum_{e \in\widehat{C}_0} \omega_e
< \frac{\mathbb
{E} \omega_e + \lambda_0^+}{2} m \biggr) > 0.
\]
Writing $C_k^0$ for the set of edges with an endpoint in $C_k \setminus
\widehat{C}_k$, we see that the above event is $e$-increasing for all $e
\in C_0^0$. So by Lemma~\ref{lemmodification},
\[
\mathbb{P} \biggl( \sum_{e \in\widehat{C}_0} \omega_e
< \frac{\mathbb
{E} \omega_e + \lambda_0^+}{2} m, \omega_f \geq\frac{\mathbb{E}
\omega_e + \lambda_0^+}{2}
\mbox{ for all } f \in C_0^0 \biggr) > 0.
\]
On this event, each path which passes from the left to the right-hand side
of $C_0$, taking only edges in $C_0^0$, must have passage time at least
$\frac{\mathbb{E} \omega_e + \lambda_0^+}{2} m$. So for such $m$,
horizontal translation invariance gives $\mathbb{P}(E_k)>0$.

In the case that $\lambda_0^+=\infty$, the proof of $\mathbb
{P}(E_k)>0$ is easier. We simply modify the edge-weights for edges in
$C_0^0$ to be larger than the sum of the boundary edge-weights with
positive probability. In either case, the ergodic theorem shows that
\[
\mathbb{P}(E_k \mbox{ occurs for infinitely many } k >0\mbox{ and }
k < 0) = 1.
\]
For any $k$ such that $E_k$ occurs, no geodesic can pass from the
left-hand to the right-hand side of $C_k$ taking only edges in $C_k^0$,
because we can replace the segment between the left-hand and right-hand
sides by a portion of the boundary $\partial C_0$. This shows (\ref
{eqnostrip}). The rest of the lemma follows immediately.
\end{pf}

We now move to \cite{WW98}, Proposition~6, the main observation
showing that unique passage times implies that $\gamma_0$ must
intersect any large box with probability bounded below uniformly of the
position of the box. For $l \in\mathbb{N}$, let us write $B=B(l) =
[-l,l] \times[0,2l]$, and let $K$ be the event that at least one
bigeodesic intersects $B$. Define for $L \in\mathbb{N}$, translations
of $B$ by
\[
B_{i,j} = B_{i,j}(l,L) = B + (iL,jL)\qquad\mbox{for } (i,j)
\in V_H.
\]
For $L > 2l$, the $B_{i,j}$ are mutually disjoint.

%
\begin{prop}\label{prophitallboxes}
Let $\delta= 1-\mathbb{P}(K)$. Then
\begin{eqnarray*}
\mathbb{P}\bigl(B_{i,j} \subset R^+(\gamma_0)\bigr) &\leq&
\delta,
\\
\mathbb{P}\bigl(B_{i,j} \subset R^-(\gamma_0)\bigr) &\leq&
\delta.
\end{eqnarray*}
\end{prop}

\begin{pf}
The proof is the same as that in \cite{WW98}.
\end{pf}

\subsection{Main modifications}\label{sec7.2}

From this point on we must obtain a contradiction in a different manner
than what was used in \cite{WW98}; this is because the large deviation
estimate \cite{WW98}, Lemma~9, does not necessarily hold in our setting.

A consequence of Proposition~\ref{prophitallboxes} is that for any
$i,j$, $\mathbb{P}(B_{i,j} \cap\gamma_0 \neq\varnothing) > 1-2\delta
$. So using $\mathbb{P}(K^*) = 1$, choose $l$ large enough that
$1-2\delta>0$ and fix $L = 2l + 1$. For any $n \in\mathbb{N}$ let
$N_n$ be the number of boxes $B_{i,j}$ contained in $R_n:= [-l,nL+l]
\times[0,nL+2l]$ such that $B_{i,j} \cap\gamma_0 \neq\varnothing$
(the maximum number is $n^2$). The choice of~$l$ ensures that there is
a constant $c_1$ with $0<c_1\leq1$ such that
\[
\mathbb{E} N_n \geq c_1n^2\qquad\mbox{for
all }n \in\mathbb{N}.
\]
Therefore writing $\mathcal{E}_n$ for the set of edges with both
endpoints in $R_n$, for some $c_2>0$,
%
\begin{equation}
\label{eqlongpaths} \mathbb{E} \# \gamma_0 \cap\mathcal{E}_n
\geq c_2n^2\qquad\mbox {for all } n \in\mathbb{N}.
\end{equation}
%
We can then argue the following.

%
\begin{lem}\label{lemlongpaths}
Assuming $\mathbb{P}(K^*)=1$, there exists $c_3>0$ such that with
positive probability, for an infinite number of $n \in\mathbb{N}$,
there are vertices $v_1,v_2 \in\partial R_n$ such that the geodesic
$G(v_1,v_2)$ contains at least $c_3n^2$ edges in $\mathcal{E}_n$ with
weight at least~$c_3$.
\end{lem}

\begin{pf}
Let $a>0$ and choose $C>a$ such that $\# \mathcal{E}_n \leq Cn^2$ for
all $n \in\mathbb{N}$. Use~(\ref{eqlongpaths}) to estimate
\[
c_2n^2 \leq an^2 + \bigl(Cn^2-an^2
\bigr) \mathbb{P}\bigl(\# \gamma_0 \cap\mathcal {E}_n
\geq an^2\bigr),
\]
giving
%
\begin{equation}
\label{eqend1} \mathbb{P} \bigl(\# \gamma_0 \cap
\mathcal{E}_n \geq an^2 \bigr) \geq\frac{c_2-a}{C-a}.
\end{equation}
Furthermore for $b>0$, writing $p_b = \mathbb{P}(\omega_e < b)$, and
$N_n' = \# \{e \in\mathcal{E}_n\dvtx  \omega_e < b\}$,
\[
\# \mathcal{E}_n p_b = \mathbb{E} N_n'
\geq\sqrt{p_b}n^2 \mathbb {P}\bigl(N_n'
\geq\sqrt{p_b}n^2\bigr),
\]
so $\mathbb{P}(N_n' \geq\sqrt{p_b}n^2) \leq\frac{\#\mathcal{E}_n
\sqrt{p_b}}{n^2}$.\vadjust{\goodbreak}

Because $\mathbb{P}$ has unique passage times, $\mathbb{P}(\omega_e
= 0) = 0$ and so $p_b \to0$ as $b \to0$. Thus $\mathbb{P}(N_n' \geq
\sqrt{p_b}n^2) \to0$ uniformly in $n$ as $b \to0$. Combining this
with (\ref{eqend1}), choosing $a$ and $b$ small enough,
\[
\mathbb{P}\bigl(N_n' < a n^2/2
\mbox{ and }\# \gamma_0 \cap \mathcal{E}_n \geq a
n^2\bigr) > \frac{c_2}{2C}\qquad\mbox{for all } n \in\mathbb{N}.
\]
With probability at least $c_2/(2C)$, this event occurs for infinitely
many $n$ and gives at least $an^2/2$ edges in $\gamma_0 \cap\mathcal
{E}_n$ with weight at least $b$, so set $c_3 < \min\{c_2/(2C),a/2\}$.
For such an $n$, we take $v_1$ and $v_2$ to be the first and last
vertices that $\gamma_0$ touches in $R_n$.
\end{pf}

To contradict Lemma~\ref{lemlongpaths}, we will need to handle
assumptions \textup{(A)} and \textup{(B)} differently.

\subsubsection{Contradiction under \textup{(A)}}\label{sec7.2.1}

Because assumption \textup{(A)} does not include a moment condition on the
variable $\omega_e$, we will need to define modified passage times
similarly to \cite{CD81}. Choose any $D>0$ such that
\[
\mathbb{P}(\omega_e > D) \leq1/5
\]
and define a percolation process by setting $\eta_D = \eta_D(\omega)
\in\{0,1\}^{V_H}$ to be
\[
\eta_D(e) = \cases{ 0, &\quad if $\omega_e > D$,
\cr
1,
&\quad if $\omega_e \leq D$.}
\]
Because the weights $(\omega_e)$ are i.i.d., so are the variables
$(\eta_D(e))$. Because the critical value for bond percolation on
$\mathbb{Z}^2$ is $1/2$, this is a supercritical percolation process.
The following lemma holds for any $D$ such that $(\eta_D(e))$ is
supercritical, but we will give a simple proof for $D$ as above. For
the statement, we define an open half-circuit to be a path in $\mathbb
{H}$ whose initial and final endpoints are on the first coordinate axis
and all of whose edges $e$ have $\eta_D(e)=1$.

\begin{lem}
Define $B_n$ as the box $[-n,n] \times[0,2n]$ and $A_n$ as the
half-annulus $A_n = B_n \setminus B_{n-\sqrt n}$. Then
\[
\sum_n \mathbb{P}\bigl(\mbox{there is no open
half-circuit of edges in } A_n \mbox{ enclosing }(0,0)\bigr) < \infty.
\]
\end{lem}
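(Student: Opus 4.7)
The plan is to exploit the fact that $p := \mathbb{P}(\eta_D(e) = 1) \geq 4/5 > p_c(\mathbb{Z}^2) = 1/2$, so the configuration $(\eta_D(e))$ is well within the supercritical regime for Bernoulli bond percolation; we then invoke a standard Peierls-type estimate applied to the dual (subcritical) process.

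First I would translate the non-existence of an open half-circuit into a dual crossing statement. Viewing $A_n$ as a planar region in $\mathbb{H}$, the two axis segments $\ell_L := [-n,-n+\sqrt{n}]\times\{0\}$ and $\ell_R := [n-\sqrt{n},n]\times\{0\}$ are the two ``ends'' of $A_n$, while the outer boundary $O := \partial B_n \cap \mathbb{H}$ and the inner boundary $I := \partial B_{n-\sqrt{n}} \cap \mathbb{H}$ are the two ``sides.'' The event that no open half-circuit in $A_n$ encloses $(0,0)$ is precisely the event that there is no $\eta_D$-open path in $A_n$ from $\ell_L$ to $\ell_R$. By standard planar duality in the upper half-plane (with the unbounded face of $(V_H,E_H)$ encoding both the region above $B_n$ and the region below $L_0$), this is equivalent to the existence of a self-avoiding path in the dual $(\mathcal{E}^2)^*$, consisting entirely of edges dual to $\eta_D$-closed primal edges, which crosses $A_n$ from $I$ to $O$. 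The narrowest part of $A_n$ has width $\sqrt{n}$ (realized in the two vertical strips $[-n,-n+\sqrt{n}]\times[0,2n]$ and $[n-\sqrt{n},n]\times[0,2n]$), so any such dual crossing uses at least $\sqrt{n}$ edges.

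Next I would apply a Peierls union bound. The inner boundary $I$ has length $O(n)$, so there are at most $O(n)$ possible dual starting edges. For each starting edge, the number of self-avoiding dual walks of length $k$ in $(\mathbb{Z}^2)^*$ is at most $4\cdot 3^{k-1}$. Since $(\omega_e)$ are i.i.d.\ with $\mathbb{P}(\omega_e > D) \leq 1/5$, the probability that all $k$ edges of a fixed dual walk are closed is at most $(1/5)^k$. Combining,
\[
\mathbb{P}(\text{no open half-circuit in } A_n \text{ enclosing } (0,0)) \;\leq\; C n \sum_{k \geq \sqrt{n}} 3^k \left( \tfrac{1}{5} \right)^k \;\leq\; C' n \left(\tfrac{3}{5}\right)^{\sqrt{n}}.
\]
Since $n(3/5)^{\sqrt{n}}$ is summable (it decays super-polynomially), the lemma follows.

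The main obstacle will be setting up the duality cleanly in the half-plane. The axis $L_0$ is a planar boundary of $\mathbb{H}$, so the unbounded face of $(V_H,E_H)$ contains everything outside $\mathbb{H}$. To apply the usual ``no open end-to-end primal crossing iff closed side-to-side dual crossing'' equivalence to the half-annular region $A_n$, one augments the dual of $A_n$ by a single external dual vertex representing this unbounded face, and then verifies that the dual path extracted from the failure of an $\ell_L$-to-$\ell_R$ crossing really connects $I$ to $O$ (rather than, say, hugging $\ell_L$ or connecting two points of $I$). Once that equivalence is in place, the remainder is a routine Peierls computation.
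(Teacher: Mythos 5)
Your proposal is correct and follows essentially the same route as the paper: both arguments reduce the absence of an open half-circuit in $A_n$ to the existence of a closed dual crossing of the half-annulus, which must have length at least $\sqrt{n}$, and then apply a Peierls union bound over the $O(n)$ starting points on the boundary, using $\mathbb{P}(\omega_e > D) \leq 1/5$ to beat the connective-constant factor (the paper uses the cruder $4^k$ count, giving $(4/5)^{\sqrt n}$ in place of your $(3/5)^{\sqrt n}$, and runs the dual path from the outer boundary inward rather than from $I$ to $O$, but these are cosmetic differences). The resulting bound $O(n)\cdot c^{\sqrt{n}}$ with $c<1$ is summable in both versions.
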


\begin{pf}
We will consider the dual half-plane lattice $\mathbb{H}^*$, whose
vertex set is $V_H^* = V_H -(1/2,1/2)$ and whose edge set is $E_H^* =
[ E_H \setminus X ] - (1/2,1/2)$, where $X$ is the set of
edges joining vertices on the first coordinate axis. The configuration
$\eta_D$ induces one on the dual lattice $\eta_D^*$, where we set
$\eta_D^*(e^*) =1$ if $\eta_D(e)=1$ and $0$ otherwise. Here $e^*$ is
the edge dual to $e \in E_H$; that is, the unique dual edge which
bisects $e$. Note that $\eta_D^*$ has a product distribution with
\mbox{$\mathbb{P}(\eta_D^*(e^*)=1) = \mathbb{P}(\omega_e \leq D)$}.

For $v \in V_H^*$ and $n \in\mathbb{N}$, let $F_n(v)$ be the event
that there is a dual path of $n$ dual edges $e^*$ starting at $v$
satisfying $\eta_D^*(e^*)=0$ for all $e^*$. Then
\[
\mathbb{P} \bigl(F_n(v)\bigr) \leq\sum_{|P|=n}
\mathbb{P}(\omega_e > D)^n \leq\bigl(4\mathbb{P}(
\omega_e>D)\bigr)^n \leq(4/5)^n,
\]
where the sum is over all dual paths $P$ starting at $v$ with length
$n$. Therefore, letting $\partial_n^*$ be the set of dual vertices in
$B_n$ within Euclidean distance 1 of $\partial B_n$,
\[
\sum_n \sum_{v \in\partial_n^*}
\mathbb{P}\bigl(F_{\sqrt n}(v)\bigr) < \infty.
\]
But if there is no open half-circuit of edges in $A_n$ enclosing
$(0,0)$, then there is a dual path with all dual edges $e^*$ satisfying
$\eta_D^*(e^*) = 0$ starting at a dual vertex in~$\partial_n^*$ and
ending in $B_{n-\sqrt n}$.
\end{pf}

\begin{figure}[b]

\includegraphics{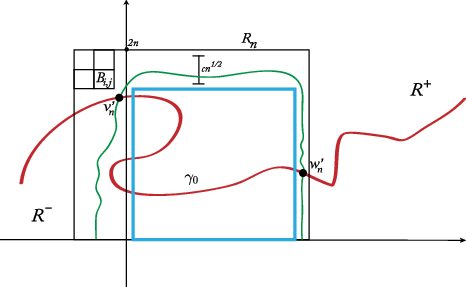}

\caption{The\vspace*{2pt} annulus $R_n \setminus [ B_{nL/2+l - \sqrt
{nL/2+l}} + (nL/2+l,0) ]$. The open half-circuit $\mathcal{C}_n$
is between the two half-boxes, and the geodesic $\gamma_0$ is the bold
path entering and leaving the large box. The first intersection of
$\gamma_0$ with $\mathcal{C}_n$ is $v_n'$ and the last intersection
is $w_n'$. Because $\gamma_0$ intersects order $n^2$ number of edges
in the inner half-box with weight at least $c_3$, $\tau(v_n',w_n')$ is
at least order $n^2$.}
\label{figevent6}
\end{figure}

Note that there is some $C>0$ such that, if $v,w$ are vertices in such
an open half-circuit mentioned in the previous lemma, then
%
\begin{equation}
\label{eqpassageupperbound} \tau(v,w) \leq CDn^{3/2}.
\end{equation}
Combining this with Lemma~\ref{lemlongpaths}, we see that with
positive probability, for infinitely many $n$, both of the following occur:
\begin{longlist}[(2)]
\item[(1)] there exist $v_n, w_n \in\partial R_n$ such that the
geodesic $G(v_n,w_n)$ contains at least $c_3n^2$ edges in $\mathcal
{E}_n$ with edge-weight at least $c_3$, and
\item[(2)] the annulus $R_n \setminus [ B_{nL/2+l - \sqrt
{nL/2+l}} + (nL/2+l,0) ]$ contains an open half-circuit $\mathcal
{C}_n$ of edges enclosing $(nL/2+l,0)$. (See Figure \ref{figevent6}.)\vadjust{\goodbreak}
\end{longlist}
Note that the above annulus contains only order $n^{3/2}$ edges total.
Therefore when these two conditions hold for large $n$, the geodesic
$G(v_n,w_n)$ must contain at least $c_3n^2/2$ edges in $B_{nL/2+l-\sqrt
{nL/2+l}} + (nL/2+l,0)$ with weight at least $c_3$. This means that
this geodesic must intersect $\mathcal{C}_n$ and contain at least
$c_3n^2/2$ edges with weight at least $c_3$ between two intersections
with $\mathcal{C}_n$. Consequently, there exist vertices $v_n'$ and
$w_n'$ on $\mathcal{C}_n$ such that $\tau(v_n',w_n') \geq
c_3^2n^2/2$. This contradicts (\ref{eqpassageupperbound}) for
large $n$.

\subsubsection{Contradiction under \textup{(B)}}\label{sec7.2.2}
Lemma~\ref{lemlongpaths} implies that with positive probability,
for infinitely many $n$, there are two vertices $v,w$ in $\partial R_n$
such that $\tau(v,w) \geq c_3^2n^2$. But this passage time is bounded
above by the sum of edge\ weights for edges in $\partial R_n$, and we find
\begin{eqnarray*}
\mathbb{P}\bigl(\tau(v,w) \geq c_3^2n^2
\mbox{ for some } v,w \in \partial R_n\bigr) &\leq&\mathbb{P}
\biggl(\sum_{e \in\partial R_n} \omega_e \geq
c_3^2n^2\biggr)
\\
& \leq& \frac{1}{c_3^4n^4} \mathbb{E} \biggl( \sum_{e \in\partial
R_n}
\omega_e \biggr)^2 = O\bigl(n^{-2}\bigr)
\end{eqnarray*}
as $n \to\infty$. Borel--Cantelli then contradicts Lemma~\ref{lemlongpaths}.
\end{appendix}

\section*{Acknowledgments}
The authors thank the Courant Institute for
hospitality. Antonio Auffinger thanks Princeton University for accommodation and
support during visits. Michael Damron thanks C. Newman for summer funding and
Jack  Hanson M. Aizenman for funding and support.


%

\printaddresses

\end{document}